\newcolumntype{?}{!{\vrule width 1pt}}
\algrenewcommand{\algorithmiccomment}[1]{\hfill $\rhd$ \emph{#1}}
\algrenewcommand{\algorithmicrequire}{\textbf{Input:}}
\algrenewcommand{\algorithmicensure}{\textbf{Output:}}
\algnewcommand{\Or}{\textbf{or}}
\algnewcommand{\And}{\textbf{and}}
\algnewcommand{\Not}{\textbf{not}\,}
\algnewcommand\algorithmicforeach{\textbf{for each}}
\colorlet{critical}{red!20}
\colorlet{postponed}{blue!20}
\colorlet{minor}{green!20}
\newcommand\numberOfOrbits{14\,373\,645}
\newcommand\numberOfTriangulations{344\, 843 \,867}
\newcommand\idHampeJoswigTriangulation{5054117}
\newcommand\idHoneycomb{12369387}
\newcommand{\PP}{\mathbb{P}}
\newcommand{\RR}{\mathbb{R}}
\newcommand{\QQ}{\mathbb{Q}}
\newcommand{\ZZ}{\mathbb{Z}}
\newcommand{\NN}{\mathbb{N}}
\newcommand{\TP}{\mathbb{TP}}
\newcommand{\cP}{\mathcal{P}}
\newcommand{\motif}{\mathcal{R}}
\DeclareMathOperator{\seccone}{sec}
\newcommand\smallSetOf[2]{\{#1 \colon #2\}}
\newcommand{\vones}{{\bf 1}}
\newcommand\transpose[1]{{#1}^{\top}}
\DeclareMathOperator{\Trop}{Trop}
\DeclareMathOperator{\Pfaff}{Pfaff}
\DeclareMathOperator{\Gr}{G}
\DeclareMathOperator{\initial}{in}
\newtheorem{theorem}{Theorem}
\numberwithin{theorem}{section}
\newtheorem{proposition}[theorem]{Proposition}
\newtheorem{lemma}[theorem]{Lemma}
\theoremstyle{definition}
\newtheorem{example}[theorem]{Example}
\newtheorem{remark}[theorem]{Remark}
\newcommand\polymake{{\tt polymake}\xspace}
\newcommand\polydb{{\tt polyDB}\xspace}
\newcommand\scip{{\tt SCIP}\xspace}
\newcommand\nauty{{\tt nauty}\xspace}
\newcommand\atint{{\tt a-tint}\xspace}
\newcommand\macaulay{{\tt Macaulay2}\xspace}
\date{}
\author[M.\,Joswig]{Michael Joswig}
\address[M.\,Joswig]{TU Berlin\\Germany}
\email{joswig@math.tu-berlin.de}
\author[M.\,Panizzut]{Marta Panizzut}
\address[M.\,Panizzut]{TU Berlin\\Germany}
\email{panizzut@math.tu-berlin.de}
\author[B.\,Sturmfels]{Bernd Sturmfels}
\address[B.\,Sturmfels]{MPI Leipzig\\Germany and UC Berkeley\\USA}
\email{bernd@mis.mpg.de}
\title[The Schl\"afli Fan]{The Schl\"afli Fan}
\begin{document}
\maketitle

\begin{abstract}
 Smooth tropical cubic surfaces are parametrized by maximal
cones in the unimodular secondary fan of the triple tetrahedron.
There are $\numberOfTriangulations$ such cones, organized into a database of
$\numberOfOrbits$ symmetry classes. The Schl\"afli fan gives a further
refinement of these cones. It reveals all possible patterns of lines on tropical cubic surfaces, thus serving
as a combinatorial base space for the universal Fano variety.
 This article develops the relevant theory and offers
a blueprint for the analysis of big data in tropical  geometry.
\end{abstract}

\maketitle

\section{Introduction}
A cubic surface in  projective $3$-space $\PP^3$ is the zero set of a cubic polynomial
\begin{equation} \label{eq:cubicf}
  \begin{split}
  c_0&w^3 + c_1w^2z + c_2wz^2 + c_3z^3 + c_4w^2y + c_5wyz + c_6yz^2
+ c_7wy^2 + c_8y^2z + c_9y^3  +  c_{10}w^2x \\  &+ \,  
c_{11}wxz + c_{12}xz^2 + c_{13}wxy + c_{14}xyz + c_{15}xy^2 + c_{16}wx^2 + c_{17}x^2z + c_{18}x^2y +
c_{19}x^3.
  \end{split}
\end{equation}
Here $(w:x:y:z)$ are homogeneous coordinates on $\PP^3$.
 George Salmon and Arthur~Cayley discovered in the
1840s that every smooth cubic surface contains $27$ lines.
  Ludwig Schl\"afli studied the combinatorics of the lines in his 1858 article  \cite{Schl}.
The name of that Swiss mathematician appears in our title.

This article is dedicated to the memory of Branko Gr\"unbaum.
Gr\"unbaum is famous for his work on polytopes and arrangements,
especially those that admit a high degree of symmetry. In the literature
on these geometric figures, one sees a direct line connecting
Ludwig Schl\"afli to Branko Gr\"unbaum. This is highlighted
by the use of the \emph{Schl\"afli symbol} for  symmetries
of polyhedra. 

The combinatorial strand of algebraic geometry underwent
a major shift during the past two decades, thanks to the
advent of tropical geometry  \cite{TropicalBook}.
The following question emerged early on during the tropical revolution:
\emph{What are all shapes of smooth cubic surfaces in tropical $3$-space,
and which arrangements of tropical lines occur on such surfaces?}
A first guess is that there are $27$ lines, just like in the classical case.
But this is false. Vigeland~\cite{VigelandInfinite}
showed that the number of lines can be infinite.
A textbook reference is  \cite[Theorem~4.5.8]{TropicalBook}.

The aim of this article is to give a comprehensive answer to the questions above.
We will do so via a computational study of  all smooth tropical cubic surfaces.
These surfaces are dual to  unimodular regular triangulations of the 
\emph{triple tetrahedron} $3 \Delta_3$, which is the Newton polytope of 
the cubic polynomial seen in (\ref{eq:cubicf}).
The relevant definitions will be reviewed in Section \ref{sec:subdivisions}.

Our point of departure is the article \cite{PV}, which classifies 
the ten motifs that describe the potential positions of a tropical line
on a cubic surface. These motifs are  denoted 3A, 3B, $\ldots\,$, 3J.
They are shown in Table \ref{Table:dualmotif}.
The advance we report in this paper is a large-scale computation
that identifies the motifs of all lines that actually occur on the many tropical smooth cubic surfaces.

Our contribution rests on earlier work by Jordan et al.~\cite{mptopcom:paper}
who developed highly efficient tools for enumerating triangulations.
Their count for $3 \Delta_3$ in
\cite[Theorem 19]{mptopcom:paper} shows
that  there are $\numberOfOrbits$ combinatorial types of 
smooth tropical cubic surfaces. Here, the types are the orbits of  the 
symmetric group $S_4$ by permuting $w,x,y,z$ in the $20$ terms of~(\ref{eq:cubicf}).
 Adding up the sizes of all $S_4$-orbits, we obtain the total number 
 $\numberOfTriangulations$ of smooth tropical cubics.
 
This article is organized as follows.
In Section~\ref{sec:subdivisions} we fix notation, we discuss unimodular triangulations
of the tetrahedron $3 \Delta_3$, and we review basics on lines and surfaces in tropical projective space $\mathbb{TP}^3$.
We also recall the classification of motifs in \cite{PV}.
Section~\ref{sec:software} furnishes our classification of smooth tropical cubic surfaces.
This is presented in Theorem~\ref{thm:censustriang}, and it is followed by a detailed
explanation of the methodology that underlies our work and its results.
 
Section \ref{sec:motifs} studies occurrences of motifs 
in the unimodular triangulations of $3 \Delta_3$.
Our main result is Theorem \ref{thm:114}. We present an
algorithm for computing occurrences. This rests on several lemmas
that describe geometric constraints.  The algorithm is applied to
all triangulations in Theorem~\ref{thm:censustriang}.
As a consequence, we get a complete list of  occurrences of motifs for each of the $\numberOfOrbits$ types.

In Section \ref{sec:schlaefli} we zoom in on particular secondary cones.
For each cubic surface of one type, an occurrence of a motif may be visible or not.
Being visible means that there exists a line for that motif.
Hence, for any specific surface,
only a subset of the motifs occurring in the triangulation is visible.
The regions on which that subset is constant are convex polyhedral cones.
These form the Schl\"afli fan.
Thus, each of the $\numberOfOrbits$  secondary cones is divided
into its Schl\"afli cones. We present and discuss 
the result of that computation.

Our combinatorial and computational study in this paper
lays the foundation for future work on the nonarchimedean geometry
of classical cubic surfaces over a valued field.
In Section \ref{sec:fano} we take a step into that direction.
We discuss the universal Fano variety and the universal Brill variety,
and we examine the tropical discriminants of these universal families.
The first version of this article had a Section 7 which
proposed a normal form for cubic surfaces, called the eight-point model.
This was deleted in this final version because an even better such model
was found in the subsequent project \cite{sicily} with Emre Sert\"oz.

The methods from computer algebra and polyhedral geometry which led to our results are at the forefront of what is currently possible in terms of hardware, algorithms and software.
For instance, to determine and analyze the regular unimodular triangulations of $3\Delta_3$ took more than 200 CPU days on an Intel Xeon E5-2630 v2 cluster.
Yet the most difficult question we had to answer was how to make the results of such a large computation available to 
others.
For this we set up a \polymake extension \texttt{TropicalCubics} \cite{extension:tropcubics} and a database within the \polydb framework \cite{polydb:paper}.
They can be accessed via \polymake \cite{DMV:polymake}. The database can also be used via  an independent API.
We believe that this approach can serve as a model for sharing \enquote{big data} in mathematical research.

\section{Triangulations, Cubic Surfaces and Tropical Lines}
\label{sec:subdivisions}

In this section we review the basics and known results on which our study rests.
For conventions on tropical geometry we follow the textbook by
Maclagan and Sturmfels \cite{TropicalBook}. Our tropical semiring is
the min-plus algebra $(\RR \cup \{\infty\}, \oplus, \odot)$.
We use upper case letters to denote tropical variables and coefficients. 
Our orderings of variables and monomials are consistent with the 
conventions used by \polymake \cite{DMV:polymake}.
For instance, here is a homogeneous tropical cubic polynomial:
\begin{equation}\label{eq:typical:trop-polynomial}
  \begin{split}
  44&W^3 \oplus W^2Z \oplus 1WZ^2 \oplus 15Z^3 \oplus 19W^2Y \oplus WYZ \oplus 9YZ^2 \\
  & \oplus 2WY^2 \oplus 4Y^2Z \oplus Y^3 \oplus 38W^2X \oplus WXZ \oplus 15XZ^2 \oplus 16WXY \\
  & \oplus 4XYZ \oplus 1XY^2 \oplus 33WX^2 \oplus 16X^2Z \oplus 14X^2Y \oplus 29X^3.
  \end{split}
\end{equation}
 The expression (\ref{eq:typical:trop-polynomial})
 is evaluated in classical arithmetic as follows:
\[ \min \bigl\{44 {+} 3 W, \,2W {+} Z,\, 1 {+}W {+} 2Z, \, 
15 {+} 3Z, \, \ldots\,,\,14 {+} 2X {+} Y, \, 29 {+} 3 X \bigr\}. \]
The surface defined by (\ref{eq:typical:trop-polynomial}) is the
set of all points $(W,X,Y,Z)$ for which this minimum is attained at least twice.
That tropical cubic surface lives in the tropical projective torus $\RR^4/\RR\vones$,
but it also has a natural compactification in the
tropical projective space $\TP^3$. The latter is described in \cite[Chapter 6]{TropicalBook}.

A standard reference for the material that follows next is the textbook by De Loera, Rambau and Santos \cite{book:triangulations}. 
Reading the coefficients of the tropical polynomial as a height function defines a regular 
polyhedral subdivision of the $20$ lattice points in $3 \Delta_3$.
If the coefficients are generic enough then the dual subdivision is a triangulation.
For now the latter property may be taken as a definition for \emph{generic}; it is a main point of 
later sections to refine this.
If each of its tetrahedra has unit normalized volume, then the triangulation is \emph{unimodular} and the tropical cubic surface is  \emph{smooth}.
Every unimodular triangulation $T$ of the configuration
$3\Delta_3$ has the same  f-vector $f(T) = (20,64,72,27)$.
Its boundary has the f-vector $f(\partial T) = (20,54,36)$. From this we conclude that
every smooth tropical cubic surface has $27$ vertices, 
$36$ edges, $36$ rays, $10$ bounded 2-cells, and $54$ unbounded 2-cells.
This is the case $d=3$ in \cite[Theorem 4.5.2]{TropicalBook}.
Specifically, the $64-54=10$ interior edges of $T$ correspond to the bounded polygons in the surface.
These $10$ polygons form the bounded complex of the tropical surface.
This is also known as the \emph{tight span}.
For cubics, it is contractible. We define the \emph{B-vector} of
the triangulation $T$ to be $(b_3,b_4,b_5,\ldots)$,
where $b_j$ denotes the number of $j$-gons in the tight span. 
The \emph{GKZ-vector} is $(g_0,g_1,\ldots,g_{19})$, where $g_i$ is the number of tetrahedra containing point~$i$.

\begin{example} \label{ex:introexample}
The tropical cubic polynomial in \eqref{eq:typical:trop-polynomial} is identified with its coefficient vector
$(44, 0, 1, 15, 19, 0, 9, 2, 4, 0, 38, 0, 15, 16, 4, 1, 33, 16, 14, 29)$. 
This defines a unimodular triangulation $T$ of $3 \Delta_3$.
Its $27$ tetrahedra are given by their labels:
\begin{equation}\footnotesize
\label{eq:typical:triangulation}
\begin{array}{l}
\{0,1,4,10\}, \, 
\{1,2,5,11\}, \, 
\{1,4,7,13\}, \, 
\{1,4,10,16\}, \, 
\{1,4,13,19\}, \, 
\{1,4,16,19\}, \, 
\{1,5,9,11\}, \, \\
\{1{,}7{,}9{,}15\}, \, 
\{1{,}7{,}13{,}18\}, \, 
\{1{,}7{,}15{,}18\}, \, 
\{1{,}9{,}11{,}15\}, \, 
\{1{,}11{,}15{,}18\}, \, 
\{1,11,18,19\}, \, 
\{1,13,18,19\}, \, \\ 
\{2,3,6,14\}, \, 
\{2,3,11,14\}, \,  
\{2,5,9,11\}, \, 
\{2,6,8,14\}, \, 
\{2,8,9,14\}, \, 
\{2,9,11,15\}, \, 
\{2,9,14,15\}, \, \\
\{2,11,14,15\}, \, 
\{3,11,12,14\}, \, 
\{11,12,14,17\}, \, 
\{11,14,15,17\}, \, 
\{11,15,17,18\}, \, 
\{11,17,18,19 \}.
\end{array}
\end{equation}

The GKZ-vector equals
$(1, 14, 9, 3, 5, 3, 2, 4, 2, 7, 2, 14, 2, 4, 9, 9, 2, 4, 7, 5)$.
The last entry $5$ means that the label
$19$ occurs five times in  (\ref{eq:typical:triangulation}).
The B-vector of \eqref{eq:typical:triangulation} is  $(2, 4, 2, 2)$.
To see this, we list the ten interior edges and their links:
\[ \footnotesize
\begin{matrix}
\! \{1, 13\} , [4, 7, \! 18, \! 19] & 
\{1, 15\}, [7, 9, 18, 11]  & \!
\{1, 18\}, [7, 13, 19, 11, 15] &
\{2, 14\}, [3, 6, 8, 9, 15,11] & 
\! \{2, 15\}, [9, \! 11, \! 14]  \\
\! \{9, 11\}, [1, 5, 2, 15] &
\! \{11, 18\}, [1, \! 15, \! 17, \! 19] & 
\{11, 14\}, [2, 3, 12, 17, 15] & 
\! \! \{11, \! 15\}, [1, 9, 2, \! 14, \! 17,\! 18] &
\{5, 11\}, [1, 2, 9]  
\end{matrix}
\]

The \emph{link} of an edge $e$ in $T$ is the graph of all edges in $T$ whose union with $e$ is a tetrahedron in $T$.
If $e$ is an interior edge of $T$, then this graph is a cycle.
For instance, the link of $\{9,11\}$ 
is the $4$-cycle $\{\{1,5\}$, $\{5,2\}$, $\{2,15\}$, $\{15,1\}\}$.
The corresponding bounded $2$-cell in the tropical cubic surface is a quadrilateral.
The triangulation \eqref{eq:typical:triangulation} lies in the same $S_4$-orbit as the one featured in \cite[\S6.2]{HampeJoswig:2017}.
\end{example}

Each of the $36$ bounded edges of the surface determines a linear inequality
among the coefficients $C_0,C_1,\ldots,C_{19}$, expressing that the edge has positive length.
The \emph{secondary cone} $\seccone(T)$ is the set of solutions to these inequalities.
This is a full-dimensional cone in $\RR^{20}$ with $4$-dimensional lineality space.
The number of facets of $\seccone(T)$ is between $16$ and $36$.
The secondary cone of the triangulation \eqref{eq:typical:triangulation} has $16$ facets.
It contains the coefficient vector of~\eqref{eq:typical:trop-polynomial}.

The symmetric group $S_4$ acts naturally on the $20$ points in $3\Delta_3$.
This induces an action on the set of all triangulations.
Note that $S_4$ also acts on the set of GKZ-vectors.
The $S_4$-orbit of the triangulation $T$ from \eqref{eq:typical:triangulation} has size 24.
Equivalently, the stabilizer of $T$ is trivial.
The census of unimodular triangulations and associated cubic surfaces
is presented in Theorem~\ref{thm:censustriang}.

\begin{table}
\caption{The ten motifs from \cite{PV} for tropical lines on generic  cubic surfaces.}\label{Table:dualmotif}
\begin{center}
\begin{small}
\begin{tabular}[t]{ccc}
\toprule
Marked Lines & Associated Motifs & Necessary Conditions \\
\midrule
\multicolumn{3}{c}{Isolated Lines}\\
\midrule
\includegraphics{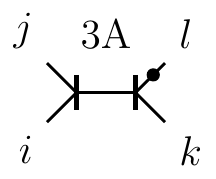} &\includegraphics{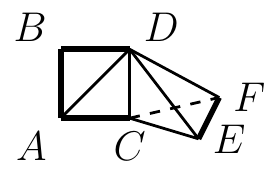} &  \raisebox{+\height}{\thead{Exits: $AB\subseteq F_i, \ \ \ BD\subseteq F_j,$ \\ $\qquad \ \ \ AC \subseteq F_k, \ \ \ EF \subseteq F_l,$ \\ $AD \subseteq \{x_i+x_j=1\},$ $CD \subseteq \{x_l=1\},$ \\  $A\not = E, \,F$ and  $B\not = C$. }}  \\ 

\includegraphics{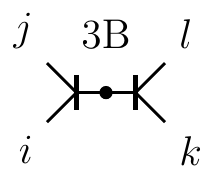} &\includegraphics{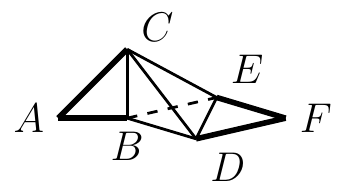} &\raisebox{+\height}{\thead{Exits: $AB\subseteq F_i, \ \ \ AC\subseteq F_j,$ \\ $ \qquad \ \ \ DF \subseteq F_k, \ \ \ EF \subseteq F_l,$\\ $BC \subseteq \{x_i+x_j=1\}$, $ DE \subseteq \{x_k + x_l=1\}$, \\ $A \not = D,E$ \ \  $F\not = B,C$ and $A\not=F$.  }}\\

\includegraphics{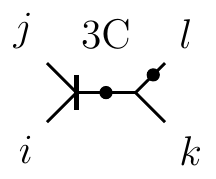}   &\includegraphics{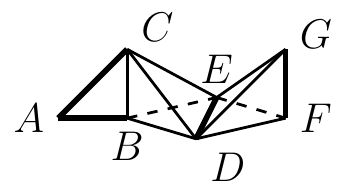} & \raisebox{+\height}{\thead{Exits: $AB\subseteq F_i, \ \ \ AC\subseteq F_j,$ \\ $ \qquad \ \ \ DE \subseteq F_k, \ \ \ FG \subseteq F_l,$ \\ $BC \subseteq \{x_i+x_j=1\},$ $ DE \subseteq \{x_l=1\}\cap F_k$, \\ $A\not =D,E$. }}\\

\includegraphics{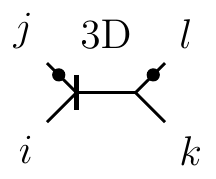}&\includegraphics{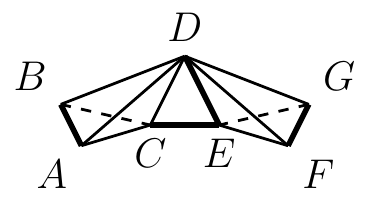}&\raisebox{+\height}{\thead{ Exits: $CE\subseteq F_i, \ \ \ AB\subseteq F_j,$ \\ $ \qquad \ \ \ DE \subseteq F_k, \ \ \ FG \subseteq F_l,$\\ $CD \subseteq \{x_j=1\},$ $DE \subseteq \{x_l=1\}\cap F_k$, \\ $E \not = A,B$.}} \\

\includegraphics{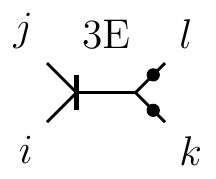} &\includegraphics{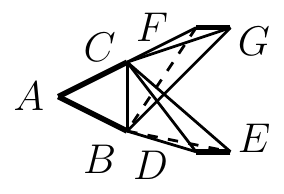}&\raisebox{+\height}{\thead{Exits: $AB\subseteq F_i, \ \ \ AC\subseteq F_j,$ \\ $ \qquad \ \ \ DE \subseteq F_k, \ \ \ FG \subseteq F_l,$ \\$BC \subseteq \{x_k=1\} \cap \{x_l=1\}$.  }} \\

\includegraphics{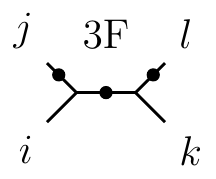} &\includegraphics{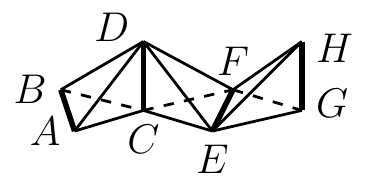}& \raisebox{+\height}{\thead{Exits: $CD\subseteq F_i, \ \ \ AB\subseteq F_j,$ \\ $ \qquad \ \ \ EF \subseteq F_k, \ \ \ GH \subseteq F_l,$\\ $CD \subseteq \{x_j=1\} \cap F_i,$  \ \ $ EF \subseteq \{x_l=1\} \cap F_k$. }}\\

\includegraphics{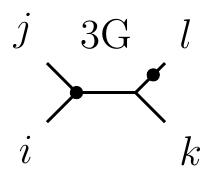} &\includegraphics{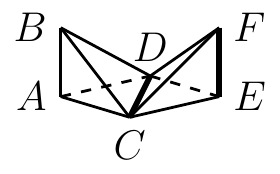}& \raisebox{+\height}{ \thead{Exits: $CD\subseteq F_k, \ \ \ EF\subseteq F_l,$ \\ $ABCD$ has exits also in $F_i$ and $F_j$, \\$CD \subseteq \{x_l=1\} \cap F_k$.} }\\

\includegraphics{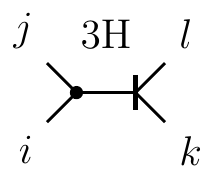} &\includegraphics{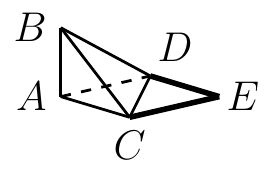}&\raisebox{+\height}{\thead{Exits: $CE\subseteq F_k, \ \ \ DE\subseteq F_l,$ \\ $ABCD$ has exits also in $F_i$ and $F_j$, \\$CD \subseteq \{x_k+x_l=1\}$,  \ \ $E \not =A,B$. }} \\
\midrule
\multicolumn{3}{c}{Families of Lines}\\
\midrule
 \includegraphics{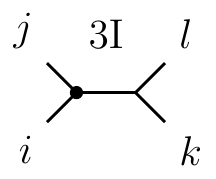}&\includegraphics{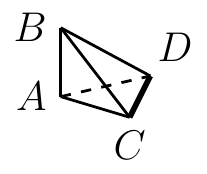} &\raisebox{+\height}{\thead{Exits: $CD\subseteq F_k \cap F_l,$ \\ $ABCD$ has  exits also in $F_i$ and $F_j$.}} \\

 \includegraphics{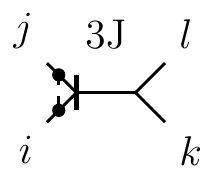}&\includegraphics{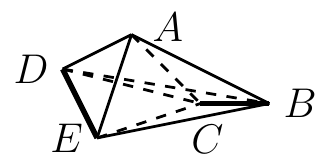} &\raisebox{+\height}{\thead{Exits: $BC\subseteq F_i \cap F_j$ \\
$\qquad \ \ \ DE \subseteq F_k \cap F_l$, \\ $AD \subseteq \{x_j=1\}, AE \subseteq \{x_i=1\}.$}}\\
\bottomrule
 \end{tabular}
 \end{small}
\end{center}
\end{table}

We now come to tropical lines in 3-space.
Vigeland started the classification of how such lines can lie on generic smooth tropical cubic surfaces.
Based on a massive random search with \polymake, Simon Hampe realized that the classification was not complete.
The triangulation \eqref{eq:typical:triangulation} occurred in the joint article \cite{HampeJoswig:2017} as the first explicit counter-example to Vigeland's list.
The final characterization is due to Panizzut and Vigeland \cite{PV}.
Their list of ten motifs is reproduced in Table \ref{Table:dualmotif}.
This table forms the foundation for our present study.

We identify $\RR^3$ with  $\RR^4 / \RR {\bf 1}$ by setting
$\omega_0 = -(e_1+e_2+e_3)$,  $\,\omega_1 = e_1$, \,$\omega_2 = e_2$ and $\omega_3 = e_3$.
A \emph{tropical line in $\RR^3$} is a balanced polyhedral complex given by two $3$-valent adjacent vertices, joined by one bounded edge, and four rays with directions $\omega_0$, $\omega_1$, $\omega_2$ and $\omega_3$.
If the bounded edge has length zero, the tropical line is  \emph{degenerate}.
Non-degenerate lines come in three labeled types, given by the
direction of the bounded edge. This direction is either
$\omega_0 + \omega_1 = - \omega_2 - \omega_3$ \ or \
$\omega_0 + \omega_2 = - \omega_1 - \omega_3$ \ or \
$\omega_0 + \omega_3 = - \omega_1 - \omega_2$. We denote these three
types  by $01|23$, $02|13$ and $03|12$.
This is shown in Figure~\ref{fig:line-types}.

\begin{figure}[tbh]
\centering
\includegraphics{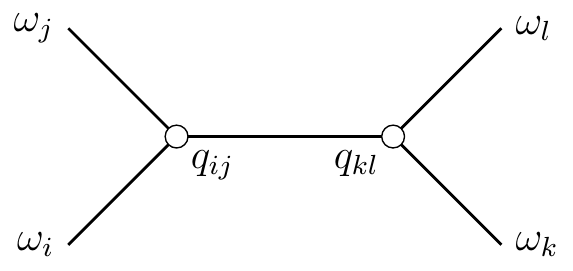}
\caption{A non-degenerate tropical line of labeled type $ij|kl$ in $3$-space.}
\label{fig:line-types}
\end{figure}

Each tropical line $L$ in $\RR^3$ is encoded (up to tropical scaling) by its \emph{tropical Pl\"ucker vector} 
$\, P=(P_{01},P_{02},P_{03},P_{12},P_{13},P_{23})   \in \RR^6$.
The six $P_{ij}$ are the tropical $2 \times 2$ minors of a $2 \times 4$-matrix.
A vector $P \in \RR^6$ is the tropical Pl\"ucker vector of a line 
if and only if it lies on the tropical hypersurface given~by
\begin{equation}\label{eq:trop-line:pluecker}
  P_{01}\odot P_{23} \ \oplus\ P_{02} \odot P_{13} \ \oplus\ P_{03} \odot P_{12}.
\end{equation}
This means that the minimum in \eqref{eq:trop-line:pluecker} is attained at least twice.
Equivalently, $P$ is a height function on the six vertices of the regular octahedron
   which induces a split into two Egyptian pyramids
\cite[Figure 4.4.1]{TropicalBook}.
The tropical hypersurface defined by (\ref{eq:trop-line:pluecker}) is the
tropical Grassmannian Trop(G$^0(2,4)$).

A tropical line $L$ is recovered from its Pl\"ucker vector $P \in \RR^6$ as follows.
We start by identifying the pair of terms in \eqref{eq:trop-line:pluecker} 
which attains the minimum. Suppose  $P_{01}+P_{23} = P_{02}+P_{13} \leq P_{03}+P_{12}$,
i.e., the labeled type is $03|12$.
Then, by \cite[Example~4.3.19]{TropicalBook}, $L$ consists of the segment joining the two points
\begin{equation}\label{eq:trop-line:vertices}
  \begin{aligned}
    q_{03} &\,=\, (P_{02}+P_{03}, P_{02}+P_{13}, P_{02}+P_{23}, P_{03}+P_{23}) \quad \text{and}\\
    q_{12} &\,=\, (P_{02}+P_{13}, P_{12}+P_{13}, P_{12}+P_{23}, P_{13}+P_{23}) \quad \, \text{in}\,\,
     \,\,  \RR^4 / \RR {\bf 1}
  \end{aligned}
\end{equation}
and the four rays $q_{03}+\RR_{\geq 0}\cdot\omega_0$, $q_{03}+\RR_{\geq0}\cdot\omega_3$, $q_{12}+\RR_{\geq 0}\cdot\omega_1$, $q_{12}+\RR_{\geq0}\cdot\omega_2$.
The formulas for the other two labeled types, $01|23$ and $02|13$, are analogous.

In summary, the vertices $q_{ij}$ and $q_{kl}$ of a tropical line $L$ are computed from the
Pl\"ucker coordinates in (\ref{eq:trop-line:vertices}). Conversely, the Pl\"ucker vector
is obtained by taking the tropical $2 \times 2$ minors of the $2 \times 4$-matrix
with rows $q_{ij}$ and $q_{kl}$. 

The article \cite{PV} describes the various ways in which a tropical line $L$
can lie on a smooth cubic surface $S$ in $3$-space.
Here we require $S$ to be generic in the precise sense of Section 5.  
On the line $L$ we mark the points where $L$ intersects edges or vertices of
the surface $S$.
These are the bars and dots indicated on the tropical lines in the left column of Table~\ref{Table:dualmotif}.
Each bar is dual to a triangle in $T$, and each dot is dual to a tetrahedron in~$T$.
Formally, a \emph{motif} of a tropical cubic surface is one of the ten abstract simplicial complexes 
3A, 3B, $\ldots \,$, 3J which are listed in the middle column of Table \ref{Table:dualmotif}.
Each is equipped with a labeling of its vertices by $A,B,\ldots$ and a marking of precisely four edges by $i,j,k,l$. 
That this list of ten motifs is complete is the main result of~\cite{PV}.

The number of vertices of the ten motifs range between four and eight; the marked edges are the \emph{exits} of the motif.
The names of the motifs
all start with the digit 3 to indicate the degree of the tropical surface; there are more motifs for other degrees \cite[Table~2]{PV}.
The article \cite{PV} distinguishes between \enquote{primal motifs} and \enquote{dual motifs}.
We use the term motif for what is called \enquote{dual motif} in \cite{PV}.  
Our Table~\ref{Table:dualmotif} uses
$x_i, x_j, x_k, x_l$ for the homogeneous coordinates of the lattice points in $3\Delta_3$,
and it uses the notation $\,F_i = \{ x \in 3 \Delta_3 \,|\, x_i = 0 \}\,$ for the facets
of $3\Delta_3$. The third column of Table \ref{Table:dualmotif} explicates
additional conditions to be satisfied by some edges
in order for the motif to occur in $T$. These~are derived in 
\cite[Proposition 23]{PV}. 
They will become important in Section~\ref{sec:motifs}.

\section{Data, Software, and Lines on Cubics}
\label{sec:software}

A primary goal of the present work is to present a database for smooth tropical cubic surfaces.
We now explain our database and the underlying methodology.
We start with the classification of combinatorial types.
The proof of this result is the computation reported in \cite[Theorem 19]{mptopcom:paper}, plus an analysis of the orbits.

\begin{theorem} \label{thm:censustriang}
  The triple tetrahedron $3 \Delta_3$ has precisely $\numberOfTriangulations$ regular unimodular triangulations.
  These are grouped into $\numberOfOrbits$ orbits with respect to the natural action of $S_4$.
  The distribution of orbit sizes is shown in Table~\ref{tab:orbit-sizes}.
\end{theorem}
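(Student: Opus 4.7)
The statement is fundamentally a computer-assisted enumeration, so my plan is to reduce it to a finite (but large) computation and to justify the correctness of each step.

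First, I would invoke the enumeration algorithm of Jordan et al.\ implemented in \mptopcom. Their software performs a parallel reverse search on the flip graph of regular triangulations of $3\Delta_3$ and already records in \cite[Theorem 19]{mptopcom:paper} the total number $\numberOfTriangulations$ together with the orbit count $\numberOfOrbits$. Regularity is preserved under the bistellar flips used by reverse search, so starting from a manifestly regular seed (for example the staircase triangulation of $3\Delta_3$) guarantees that only regular triangulations are visited; unimodularity can be checked locally on each simplex by a determinant computation. Thus the first milestone is to certify, via \mptopcom, both cardinalities stated in the theorem.

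Second, to pass from a raw list of triangulations to the $S_4$-orbit data shown in Table~\ref{tab:orbit-sizes}, I would build one canonical representative per orbit. The natural $S_4$-action permutes the homogeneous variables $w,x,y,z$ and hence the $20$ lattice points of $3\Delta_3$; it acts on a triangulation $T$ by permuting the indices of its tetrahedra. I would encode each $T$ as a coloured incidence structure on its vertex set and apply \nauty to compute a canonical form under the restricted action of $S_4$. For each orbit representative $T$ the stabilizer $\mathrm{Stab}_{S_4}(T)$ is read off from the automorphism computation, and the orbit size is $24/|\mathrm{Stab}_{S_4}(T)|$. Since $|S_4|=24$, only orbit sizes in $\{1,2,3,4,6,8,12,24\}$ occur, and tallying multiplicities yields the distribution in Table~\ref{tab:orbit-sizes}.

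Third, as an internal consistency check I would verify the identity
\[
\sum_{T} \frac{24}{|\mathrm{Stab}_{S_4}(T)|} \;=\; \numberOfTriangulations,
\]
summed over the $\numberOfOrbits$ orbit representatives. Agreement with the unsymmetrized count from \mptopcom certifies that the orbit decomposition is exhaustive and non-redundant.

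The main obstacle here is scale rather than subtlety: enumerating, canonicalising and de-duplicating on the order of $3.4 \times 10^{8}$ triangulations sits at the edge of what current hardware allows, which is precisely why the computation consumed the 200+ CPU days on the Intel Xeon cluster mentioned in the introduction. The algorithmic risks to control are (i) correctness of the symmetry-aware reverse search, which is inherited from \cite{mptopcom:paper}, and (ii) memory-efficient bookkeeping of orbit representatives, for which one needs a robust canonical form and streaming storage in \polydb rather than a monolithic in-memory table.
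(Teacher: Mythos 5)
Your proposal is essentially the paper's own proof: the paper simply invokes the computation reported in \cite[Theorem 19]{mptopcom:paper}, which enumerates the regular unimodular triangulations of $3\Delta_3$ up to the $S_4$-symmetry, and then adds an analysis of the orbits (stabilizers, orbit sizes via orbit--stabilizer, and the sum of orbit sizes giving $\numberOfTriangulations$), exactly as you describe. One small correction to your justification: regularity is \emph{not} preserved by bistellar flips, so the restriction to regular triangulations in \mptopcom is not automatic from the choice of a regular seed; rather, connectivity of the regular flip graph follows from the secondary polytope, and each triangulation visited must be certified regular by a feasibility check (an LP for its secondary cone) --- but since you defer correctness of the enumeration to \cite{mptopcom:paper} anyway, this does not affect the substance of the argument.
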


\begin{table}[htb]
  \centering
  \caption{Distribution of orbit sizes among smooth tropical cubic surfaces:
  $99.93\%$ of the combinatorial types have no symmetry, i.e., the orbit size is~$24$.
  }
  \label{tab:orbit-sizes}
  \begin{tabular*}{.75\linewidth}{@{\extracolsep{\fill}}cccccc@{}}
    \toprule
    3 & 4 & 6 & 8 & 12 & 24 \\
    \midrule
    3 & 15 & 25 & 82 & 10124 & 14363396 \\
    \bottomrule
    \smallskip
  \end{tabular*}
\end{table}

\begin{remark}
  Each smooth tropical cubic surface in $\RR^4/\RR\vones$ 
  has four elliptic  curves in its boundary  in $\TP^3$.
  These are the tropical plane cubics which are dual to the induced triangulations 
  of the ten lattice points in the triple triangle $3\Delta_2$. 
  That configuration has   precisely $79$ unimodular triangulations, all of which are regular.
  They are grouped into $18$ orbits with respect to the natural action of $S_3$. 
  Hence, we encounter at most $79^4=38\,950\,081$ triangulations of the boundary $\partial(3\Delta_3)$.
  This means that, on the average, more than eight regular unimodular triangulations of $3\Delta_3$ induce the same boundary triangulation.
\end{remark}

Before we enter the technical details, we briefly pause to reflect on the nature of a result like Theorem~\ref{thm:censustriang}, how it can be useful, and to what extent it can be trusted.
Theorem~\ref{thm:censustriang} is a highly condensed statement which was derived from massive computations, partially on large clusters, and the total time spent exceeds several months.
Most readers will not have access to these types of hardware and technical
resources and therefore will be unable to repeat these computations on their own.
As we see it, the bulk of the data is the actual theorem.
Theorem~\ref{thm:censustriang} is a mere corollary which follows from something which is too large to write down in any article.
That data and more is made publically available at
\begin{equation} \label{urldata}
  \hbox{\url{https://db.polymake.org/}}
\end{equation}
to allow everyone to derive their own corollaries.
We stress that \emph{all} the software that was used in the process is open source.
Therefore, the entire proof of Theorem~\ref{thm:censustriang}, which consists of software and data (in addition to this text), is available for scrutiny.
Ideally, such a computer proof would be formalized, but currently this seems to be out of scope for a project of this size.
Turning this into a formal proof would be a large project on its own, probably much larger than \texttt{flyspeck} \cite{flyspeck}, if feasible at all.
This leaves the question of correctness.

As we see it, making data available and documenting this in an article is a 
necessary first step.
Everyone is invited to probe the data for its correctness; we prepared various tools, explained below, to help with the probing.
Any errors found in the future will be corrected in the data base.
It would be desirable to have a general mechanism for this, accepted by the mathematical community.
Finally, we would like to point out that it was a massive \polymake experiment run by Simon Hampe which lead to the triangulation \eqref{eq:typical:triangulation}, which exhibited a flaw in a first version of \cite{PV}.
That may be seen as a predecessor to this project.

\bigskip

\emph{High-level view on the data computed.}  For each of the $\numberOfOrbits$ triangulations $T$ in our database, the following annotations are reported: 
the GKZ-vector, the B-vector, the orbit size with respect to the $S_4$-action, and a unique \emph{identifier}.
The identifier is an integer between $1$ and $14\,373\,645$, which can be used to retrieve the triangulation 
and data derived. Frequently we will use the symbol `\#' for marking identifiers.
The triangulation \eqref{eq:typical:triangulation} has the identifier $\#\idHampeJoswigTriangulation$.

The facets of each triangulation are listed in lexicographic order. The representative for a
combinatorial type is chosen such that the GKZ-vector is lexicographically minimal.
Another important item in our database is a vector $C \in \NN^{20}$ 
of minimum coordinate sum in the interior of each secondary cone.
In order to find this vector, we had to solve an integer linear programming problem.
We did this using the software \scip \cite{scip}.
The coefficients of the tropical polynomial \eqref{eq:typical:trop-polynomial} 
were derived from the triangulation \eqref{eq:typical:triangulation} in this way.
Note that, by construction, $C$ is always generic in the sense that the regular subdivision induced is a triangulation.
However, it is \emph{not} generic as defined in Section~\ref{sec:schlaefli}.

\bigskip

\emph{Exploring the database.} We now describe how to access the data we produced.
We offer a collection \texttt{SchlaefliFan} within the database \texttt{Tropical} of \polydb~\cite{polydb:paper}.
The simplest possible access is by directing a standard web browser to (\ref{urldata}).
 However, for best results, we recommend the concurrent use of a recent version of \polymake~\cite{DMV:polymake}.
The new \polymake extension \texttt{TropicalCubics} \cite{extension:tropcubics} is the software companion to 
this paper. It is available from and further explained at \url{https://polymake.org/doku.php/extensions/tropicalcubics}.
Future additions will deal with other aspects of tropical cubic surfaces.

\begin{table}[th]\centering
  \caption{Data for some triangulations.  The first row is
  the triangulation \eqref{eq:typical:triangulation}, and the second one is the honeycomb triangulation from Example~\ref{ex:honeycomb:motif3D} below.  The next two are combinatorially non-isomorphic but share the same canonical hash values.  The final two are combinatorially isomorphic but in distinct orbits.}
  \label{tab:data}
  \begin{tabular*}{.75\linewidth}{@{\extracolsep{\fill}}ccc@{}}
    \toprule
    Identifier & Canonical Hash & Altshuler Determinant\\
    \midrule
    $\#\idHampeJoswigTriangulation$ & $81\,541\,384$ & $614\,912$\\[1.5ex]
    $\#\idHoneycomb$ & $1\,464\,729\,205$ & $0$ \\[1.5ex]
    $\#1957163$ & $1\,000\,016\,429$ & $278\,528$\\
    $\#3315847$ & $1\,000\,016\,429$ & $684\,032$\\[1.5ex]
    $\#10720721$ & $1\,000\,063\,702$ & $560\,512$\\
    $\#14051499$ & $1\,000\,063\,702$ & $560\,512$\\
    \bottomrule
  \end{tabular*}
\end{table}

One pertinent question is how to find a given triangulation $T$ in the database.
The user is unlikely to know the search key, and $T$ may be
given by its list of facets as in \eqref{eq:typical:triangulation}.
One way is to compute the GKZ-vector and to then generate the lexicographically minimal representative within its $S_4$-orbit.
This is the preferred method since it identifies the regular triangulation uniquely.
Thus, in practice, the lex-minimal GKZ-vector works as another search key.
An alternative method is to find a canonical form of $T$ as a simplicial complex.
This means identifying the isomorphism type of the incidence graph of the $20$ vertices and the 
$27$ tetrahedra. The software  \nauty \cite{nauty} is a standard tool for this task.
It computes a \emph{canonical hash} value, which is a 64-Bit integer that encodes the 
isomorphism type.
This hash value is also stored in our database. It can be used as an index to retrieve a triangulation
instantly; cf.\ Table~\ref{tab:data}.
 
The canonical hash value is a combinatorial invariant, but it is not unique.
Table~\ref{tab:data} shows two triangulations with the same hash value.
Nonetheless, they are not isomorphic as abstract simplicial complexes, as can be seen as follows.
Let $v_1,v_2,\dots,v_k$ and $t_1,t_2,\dots, t_l$ be an ordering of the vertices and the facets, respectively, of a simplicial complex $T$.
The \emph{incidence matrix} $J$  is the $0/1$-matrix with $J_{ij}=1$ if vertex $v_i$ lies on the facet $t_j$ and $J_{ij}=0$ otherwise. We define
the \emph{Altshuler determinant} of~$T$ to be
$
  \max\bigl(\bigl|\det (J\transpose{J})\bigr|, \bigl|\det (\transpose{J} J)\bigr| \bigr). 
$
This number does not depend on the orderings \cite[Theorem~3]{AltshulerSteinberg:1973}.
It is a combinatorial invariant of $T$.
This distinguishes the third and fourth triangulations in Table~\ref{tab:data}.
Our database can be queried for Altshuler determinants directly.

It also happens that two abstractly isomorphic triangulations lie in different $S_4$-orbits.
A pair of examples is given at the end of Table~\ref{tab:data}.
Altogether there are $79\,572$ hash values (i.e., about $0.5\%$) that  correspond to
two or more $S_4$-orbits of triangulations.
The maximal multiplicity of any hash value is four.
So, with high probability, \nauty identifies the triangulation uniquely.

\bigskip

\emph{Lines in surfaces.} We now shift gears, with a discussion of the following basic problem.
Given a non-degenerate tropical line $L$ and a tropical cubic surface $S$, decide whether $S$ contains $L$.
We present an algorithm  that solves this.

Let $\ell(t) = [\ell_0(t),\ell_1(t),\ldots,\ell_m(t)]$ be an ordered list
of linear polynomials   $\ell_i(t) = \alpha_i t + \beta_i$.
An interval $U$ in $\RR$ is \emph{covered by $\ell(t)$} if the minimum value in the list
$\ell(u)$ is attained at least twice for all $u \in U$. This can only happen if
some $\ell_i(t)$ appear multiple times in $\ell(t)$.
We introduce  the \emph{coincidence partition}
\begin{equation}
\label{eq:partition1}
 \{0,1,\ldots,m\} \,=\,
\sigma_1 \, \dot \cup \, \sigma_2 \,  \dot\cup \, \cdots \, \dot\cup \, \sigma_r ,
\end{equation}
where ($i \in \sigma_k$ and  $j \in \sigma_l$) 
implies ($\ell_i = \ell_j$ if and only if $k=l$).
We write $\ell_{\sigma_k}(t)$ for the linear function
$\ell_i(t)$ with $i \in \sigma_k$.
The tropical polynomial function $\,\RR \rightarrow \RR^{m+1},
\, t \mapsto \min \, \ell(t)\,$
defines a partition into smaller intervals,
\begin{equation}
\label{eq:partition2}
 U \,= \, U_1 \cup U_2 \cup \cdots \cup U_s,
 \end{equation}
 with the following property:
on each $U_i$  precisely one function $\ell_{\sigma_{k(i)}}$
attains the minimum among our $r$ linear functions. Then $\ell(t)$ covers $U$ if and only~if
\begin{equation}
\label{eq:partition3}
|\sigma_{k(i)}| \geq 2
\quad \hbox{for all} \quad
i \in \{1,2,\ldots,s\}. 
\end{equation}

Our discussion translates into an algorithm called
the \emph{Covering Subroutine}. Its input is an interval $U$ in $\RR$
and a list $\ell(t)$ of linear polynomials, and its output is a yes-no decision whether
$U$ is covered by $\ell(t)$. In the no-case, the Covering Subroutine
also outputs a rational number $u \in U$ such that the minimum
in $\ell(u)$ is attained only once. In the yes-case, the Covering Subroutine outputs
the list of index sets $\sigma_{k(1)},\sigma_{k(2)},\ldots,\sigma_{k(s)}$,
along with the corresponding tropical roots of $\min \, \ell(t)$.
We call this list the \emph{covering certificate}.

We next present an  algorithm that decides
whether a given non-degenerate tropical line lies on a 
given tropical cubic surface.
It makes five calls to the Covering Subroutine.
An illustration of Algorithm \ref{algo:line-on-hypersurface}
is given in Example~\ref{ex:honeycomb:motif3D}.

\begin{algorithm}[htb]
  \caption{Deciding if a tropical line $L$ lies on a tropical surface $S$ in $\RR^3$}
  \label{algo:line-on-hypersurface}
  \begin{algorithmic}[1]
    \Require{The tropical Pl\"ucker vector $P$ for $L$, and a tropical polynomial $F$ that defines $S$.}
    \Ensure{Either a certificate that $L$ lies in $S$, or a point in the set difference  $L\backslash S$.}
            \State Determine the labeled type $ij|kl$ of $L$
 \State Compute the vertices $q_{ij}$ and $q_{kl}$ of $L$ via the formulas in \eqref{eq:trop-line:vertices}
    \State         Find parametrizations for the bounded edge and the four rays of $L$. 
    These are linear maps:
$ \,   [0,1] \rightarrow [q_{ij}, q_{kl}] \,$ and
    $\,[0,\infty) \rightarrow q_{ij}+\RR_{\geq 0}\cdot\omega_i\,$ and  $\,\cdots \,$ and
   $\,[0,\infty) \rightarrow q_{kl}+\RR_{\geq 0}\cdot\omega_l$.
   \For{each of the five linear maps above}
   \State Substitute the map into $F$. Get  an interval $U$ and a list $\ell(t)$ of linear polynomials.
   \State Apply the Covering Subroutine to $(U,\ell(t))$ and obtain the answer yes or no.
   \If{no} obtain $u \in U$, plug into linear map, and output  resulting point in $L \backslash S$.
   \EndIf
   \If{yes} obtain the covering certificate $(\sigma_{k(1)},\sigma_{k(2)},\ldots,\sigma_{k(s)})$ and save it.
   \EndIf
   \EndFor 
   \If{all five answers were yes}
   \State Output the covering certificates for the bounded edge and the four rays of $L$. 
   \label{lino:return}
   \EndIf
  \end{algorithmic}
\end{algorithm}

\begin{example}\label{ex:honeycomb:motif3D}
Fix the line $L$ with
  $P = (26, 6, 17, 7, 18, 0)$ and the cubic $F$ with
  \begin{equation}\label{eq:honeycomb:coefficients} 
    C \,=\, (32, 17, 20, 41, 26, 17, 32, 33, 36, 54, 8, 1, 14, 4, 7, 18, 0, 0, 0, 0).
  \end{equation}
  This vector induces the honeycomb triangulation $\#\idHoneycomb$ from \cite[\S6]{PV}:
  \begin{small}
    \[
      \begin{array}{l}
      \{0,1,4,10\}, \,  \{1,2,5,11\}, \, \{1,4,5,13\},  \, \{1,4,10,13\}, \, \{1,5,11,13\}, \, \{1,10,11,13\}, \\ 
      \{2,3,6,12\}, \,  \{2,5,6,14\}, \, \{2,5,11,14\}, \,  \{2,6,12,14\},  \, \{2,11,12,14\} \,  \{4,5,7,13\}, \\
       \{5,6,8,14\}, \, \{5,7,8,15\}, \, \{5,7,13,15\}, \, \{5,8,14,15\}, \{5,11,13,14\} , \{5,13,14,15\}, \\
       \{7,8,9,15\}, \{10,11,13,16\}, \{11,12,14,17\}, \, \{11,13,14,18\}, \,  \{11,13,16,18\}, \\
         \{11,14,17,18\}, \,  \{11,16,17,18\},\, \{13,14,15,18\}, \,  \{16,17,18,19\}. \\
      \end{array}
    \]
  \end{small}%
 The tropical line $L$ is non-degenerate and of 
 labeled type $01|23$ because $P_{02}{+}P_{13}=P_{03}{+}P_{12}=24<26=P_{01}{+}P_{23}$.
    Using \eqref{eq:trop-line:vertices} we find $q_{01}=(19,20,0,11)$ and $q_{23}=(17,18,0,11)$.
  In all five iterations through steps 4--11, the answer is yes.
The covering certificates $\sigma$ are:
  \begin{equation}
  \label{eq:CC}
    \begin{array}{lcl}
    [q_{01},q_{23}] & \text{has} & s=1 \ \text{and} \ \sigma = (\{14,15\}) \\  
      q_{01}+\RR_{\geq0}\omega_0 & \text{has} & s=1 \ \text{and} \ \sigma = (\{14,15\}) \\
      q_{01}+\RR_{\geq0}\omega_1 & \text{has} & s=2 \ \text{and} \ \sigma = (\{14 ,15 \} ,\{5,8\}) \\
      q_{23}+\RR_{\geq0}\omega_2 & \text{has} & s=2 \ \text{and} \ \sigma = (\{ 14, 18\},\{11,17\}) \\
      q_{23}+\RR_{\geq0}\omega_3 & \text{has} & s=1 \ \text{and} \ \sigma = (\{15,18\}) \\
    \end{array}
  \end{equation}
  There are two special points where $\min \ell(t)$ is
  attained four times. At the point $q_{23}$, the minimum is attained thrice.
The relevant index sets are cells in the triangulation: two tetrahedra
 $\{5,8,14,15\}$ and $\{11,14,17,18\}$, and the triangle $\{14,15,18\}$.
 These data identify an occurrence of the motif 3D in
Table \ref{Table:dualmotif}.
\end{example}

\begin{remark}
  Algorithm~\ref{algo:line-on-hypersurface} can be turned into a method for identifying all non-degenerate tropical lines in a given tropical surface $S$ in $\RR^3$.
  Here is an alternative method for the same task.
  Let $F$ be the tropical polynomial defining $S$.
  First we compute the  \emph{dome} $\smallSetOf{(x,y)}{x\!\in\!\RR^3,\, y\!\leq\!F(x)}$.
  This is an unbounded polyhedron in $\RR^4$ which represents $F$.
  We obtain a description of the surface $S$ as a polyhedral complex by projecting the codimension 2 skeleton of the dome.
  The maximal cells of $S$ are obtained by a convex hull computation \cite[\S3]{HampeJoswig:2017}.
  From this we enumerate the poset of all cells of $S$; cf.\ \cite[Algorithm 1]{HampeJoswigSchroeter:MEGA2017}.
  Each pair of cells is a candidate for possible locations of the two vertices $q_{ij}$ and $q_{kl}$.
  These points are described as convex combinations of the cells' vertices with unknown coefficients.
  Whether or not they form the two vertices of a tropical line in $S$ can be decided by checking the feasibility of a linear program.

  Simon Hampe implemented a similar approach for tropical cubic surfaces.
  This is the function \ \texttt{lines\_in\_cubic} \ in the \polymake extension \atint~\cite{atint}, which
  is slightly different from  our Algorithm~\ref{algo:line-on-hypersurface}.
  First, \texttt{lines\_in\_cubic} also computes degerate lines; second, that function is tailored to the cubic case.
\end{remark}

\section{Motifs and their Occurrences}
\label{sec:motifs}

We now turn to the ten motifs in Table~\ref{Table:dualmotif}.
We are interested in their occurrences in the $14\,373\,645$ unimodular regular triangulations of $3\Delta_3$.
As before, our goal is the complete classification of all possibilities.
We begin by stating our main result. The proof is given by
exhaustive computations using Algorithm~\ref{algo:motif-occurrences}.

\begin{theorem} \label{thm:114}
  The number of occurrences of all motifs in the unimodular regular triangulations of $3\Delta_3$ varies between $27$ and $128$, as shown in Figure~\ref{fig:histogram}.
  There are no triangulations with precisely $122$, $124$, $125$ or $127$ occurrences.
\end{theorem}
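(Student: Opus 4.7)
The plan is to prove Theorem~\ref{thm:114} by an exhaustive census across the $\numberOfOrbits$ orbit representatives supplied by Theorem~\ref{thm:censustriang}. The conceptual core is the design of an algorithm, to be stated below as Algorithm~\ref{algo:motif-occurrences}, that counts the occurrences of each of the ten motifs 3A, 3B, \ldots, 3J in a single unimodular triangulation $T$ of $3\Delta_3$; once this is in hand, running it on every representative and tallying the totals produces the histogram in Figure~\ref{fig:histogram}, from which both the range $[27, 128]$ and the four missing values $122, 124, 125, 127$ can be read off directly.

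To design the algorithm, I would encode each motif from Table~\ref{Table:dualmotif} as a small simplicial complex whose vertices $A, B, \ldots$ are to be matched injectively with tetrahedra of $T$ and whose marked edges $i, j, k, l$ are required to correspond to triangles of $T$ lying on the prescribed facets $F_i, F_j, F_k, F_l$ of $3\Delta_3$. The further constraints in the third column of Table~\ref{Table:dualmotif}, derived in \cite[Proposition~23]{PV}, translate into explicit conditions on the lattice coordinates of the vertices of the candidate tetrahedra (for instance, $AD \subseteq \{x_i + x_j = 1\}$ becomes a condition that the relevant shared edge of the two tetrahedra consist of lattice points whose $i$- and $j$-coordinates sum to one). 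Enumeration then proceeds by constrained backtracking on the dual adjacency graph of $T$: fix a tetrahedron for the first motif vertex, extend to each neighbor through a shared triangle in the pattern prescribed by the motif, and prune aggressively using the exit and lattice conditions. Because every tetrahedron of $T$ has at most four triangle-neighbors and each motif has at most eight vertices, the local branching factor is small and the pruning terminates the search quickly.

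The algorithm is then executed on every orbit representative. For each triangulation we sum the number of occurrences found across the ten motifs and record the total; the collection of totals, treated as a multiset, yields both the histogram and the specific claim that $122, 124, 125, 127$ do not appear. Orbit sizes from Table~\ref{tab:orbit-sizes} play no role in the count per type, since the motif total is a combinatorial invariant of $T$; they are only needed if one wished to re-derive the total $\numberOfTriangulations$.

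The principal obstacle is one of scale and reliability rather than conceptual depth. Running even an efficient search on $\numberOfOrbits$ triangulations must be engineered to stay within the computational budget already described in Section~\ref{sec:software}, and the results must be stored in the \polydb collection so that the histogram is reproducible. A secondary difficulty is correctness of the translation from the third column of Table~\ref{Table:dualmotif} into combinatorial predicates, as any slip would inflate or suppress occurrences in a way that could easily go unnoticed. To guard against this I would validate the implementation on the triangulation \eqref{eq:typical:triangulation} and on the honeycomb triangulation $\#\idHoneycomb$ from Example~\ref{ex:honeycomb:motif3D}, for which the motif 3D occurrence in \eqref{eq:CC} provides a hand-checked benchmark, and I would further cross-check by comparing against the independent implementation \texttt{lines\_in\_cubic} in \atint for a sample of triangulations.
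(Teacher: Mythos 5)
Your overall strategy is the paper's: the theorem is proved by an exhaustive computation of motif occurrences over all $\numberOfOrbits$ orbit representatives from Theorem~\ref{thm:censustriang}, with a per-triangulation search pruned by the conditions in Table~\ref{Table:dualmotif} (this is Algorithm~\ref{algo:motif-occurrences} plus the feature-based speedups in Section~\ref{sec:motifs}). However, your specification of what is being counted contains two concrete errors that would make the census produce wrong numbers. First, you match the motif vertices $A,B,\ldots$ \emph{injectively} with cells of $T$, but an occurrence is defined as a simplicial map that need not be an embedding: Example~\ref{ex:motif-no-embedding} exhibits a 3F occurrence in the triangulation \eqref{eq:typical:triangulation} in which $A$ and $F$ are mapped to the same lattice point $15$. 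Imposing injectivity systematically undercounts, so the range $[27,128]$ and the missing values $122,124,125,127$ could not be certified by your enumeration. Second, you match the motif vertices with \emph{tetrahedra} of $T$ and search the dual adjacency graph, but the vertices $A,B,\ldots$ in Table~\ref{Table:dualmotif} are lattice points of $3\Delta_3$ (cf.\ Example~\ref{ex:honeycomb:alternate3D}, where $A=(1101)$, etc.); it is the maximal simplices of the motif, such as $ABCD$ or $CDEF$, that map to tetrahedra of $T$, while the exit and coordinate conditions ($AB\subseteq F_i$, $CD\subseteq\{x_j=1\}$, $A\neq E,F$, \dots) constrain lattice points and edges of $T$. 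As stated, your combinatorial predicates act on the wrong objects, and a dual-graph walk with branching factor four does not capture, e.g., the choice of the vertex $A$ in the link of an edge $CD$, which is where the paper's enumeration actually branches.

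Two further points you leave unaddressed, both emphasized in the paper. An occurrence is a labeled map, not a vertex subset: the same subcomplex can support several distinct occurrences differing only in the labeling and the exit assignment (the two 3D occurrences in Example~\ref{ex:honeycomb:alternate3D}), so the search must enumerate labeled maps. Conversely, each motif has a nontrivial symmetry group (of order up to $32$ for 3F), and symmetric occurrences determine the same line; without quotienting by these symmetries your totals would be inflated by motif-dependent factors and would not reproduce Figure~\ref{fig:histogram}. Your validation plan (benchmarking against \eqref{eq:typical:triangulation}, the honeycomb triangulation, and \texttt{lines\_in\_cubic} in \atint) is sound and would likely expose these defects, but as written the proposal's counting specification does not match the definition of occurrence and hence does not prove the statement.
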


\begin{figure}[h]
  \begin{tikzpicture} 
    \begin{axis}[ybar, bar width=2pt, xtick={20,30,...,130}, minor x tick num=1, xmajorgrids=true, ymajorgrids=true]
      \addplot coordinates {
(27,34096) (28,42264) (29,109296) (30,391688) (31,619188) (32,1312248) (33,2310868) (34,3567360) (35,4913688) (36,6780392) (37,8651136) (38,10376406) (39,12362068) (40,14029902) (41,15334176) (42,16656490) (43,17334840) (44,17647320) (45,17900688) (46,17619612) (47,16983852) (48,16378726) (49,15429624) (50,14237220) (51,13246684) (52,12068829) (53,10863180) (54,9806590) (55,8742456) (56,7721082) (57,6871684) (58,6013338) (59,5248800) (60,4585311) (61,3975540) (62,3426282) (63,2967344) (64,2536908) (65,2170584) (66,1864376) (67,1588716) (68,1346382) (69,1163388) (70,979260) (71,832968) (72,722880) (73,616344) (74,533904) (75,465584) (76,403500) (77,365568) (78,317746) (79,285840) (80,262500) (81,235116) (82,211362) (83,197124) (84,176768) (85,149400) (86,138690) (87,123228) (88,100572) (89,87036) (90,77994) (91,61032) (92,54978) (93,42204) (94,35124) (95,28968) (96,24444) (97,16272) (98,14616) (99,10824) (100,8808) (101,6360) (102,5532) (103,3984) (104,3384) (105,2988) (106,2760) (107,1548) (108,1572) (109,1152) (110,708) (111,840) (112,288) (113,456) (114,228) (115,216) (116,132) (117,144) (118,48) (119,72) (120,84) (121,24) (123,24) (126,32) (128,15) 
      };
    \end{axis}   
  \end{tikzpicture}
  \caption{The distributions of the total number of motifs, counting triangulations.
    The highest frequency is 45 motifs, which occur in $17\,900\,688$ triangulations.
    These form $745\,927$ orbits, which is
           5.12\% of orbits of regular unimodular triangulations of $3\Delta_3$.
    The minimum at 27 is attained by $34\,096$ triangulations in $1426$ orbits, and the maximum at 128 by $15$ triangulations in two orbits.
  }
  \label{fig:histogram}
\end{figure}
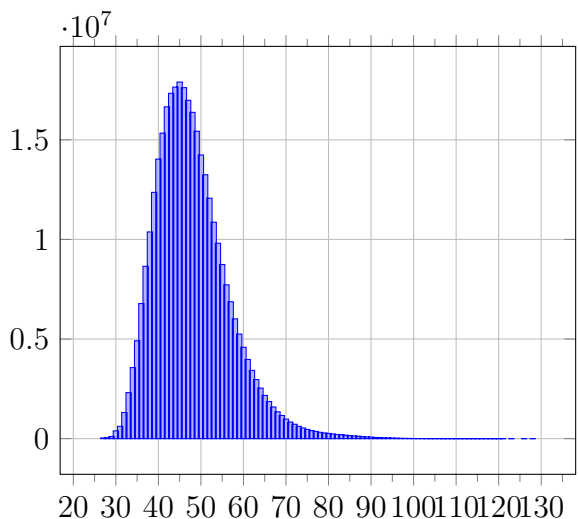

We now define the notion of occurrence.
Fix a regular unimodular triangulation $T$ of $3\Delta_3$.
Let $\motif$ be a motif, viewed as a labeled simplicial complex.
An \emph{occurrence} of $\motif$ in $T$ is a simplicial map from $\motif$ to $T$ 
that satisfies the conditions in the third column of Table~\ref{Table:dualmotif}.
These conditions include a bijection between the set $\{i,j,k,l\}$ of exits and the four facets of $3\Delta_3$.
Such a \emph{simplicial map} sends vertices of $\motif$ to vertices of $T$, while faces are mapped to faces.
Often occurrences are embeddings, but it can happen that two vertices of $\motif$ are mapped to the same vertex of $T$.
We shall see this in Example~\ref{ex:motif-no-embedding}.

An occurrence of a motif $\motif$ in $T$ is a map of simplicial complexes.
The definition above is subtle.
One might think that such a map is determined by the
image of the set of vertices of $\motif$.
This is not true! The same subcomplex 
 of $T$ may support several occurrences of a motif. We now present an example.

\begin{figure}[ht]
\centering
\includegraphics[width=\textwidth]{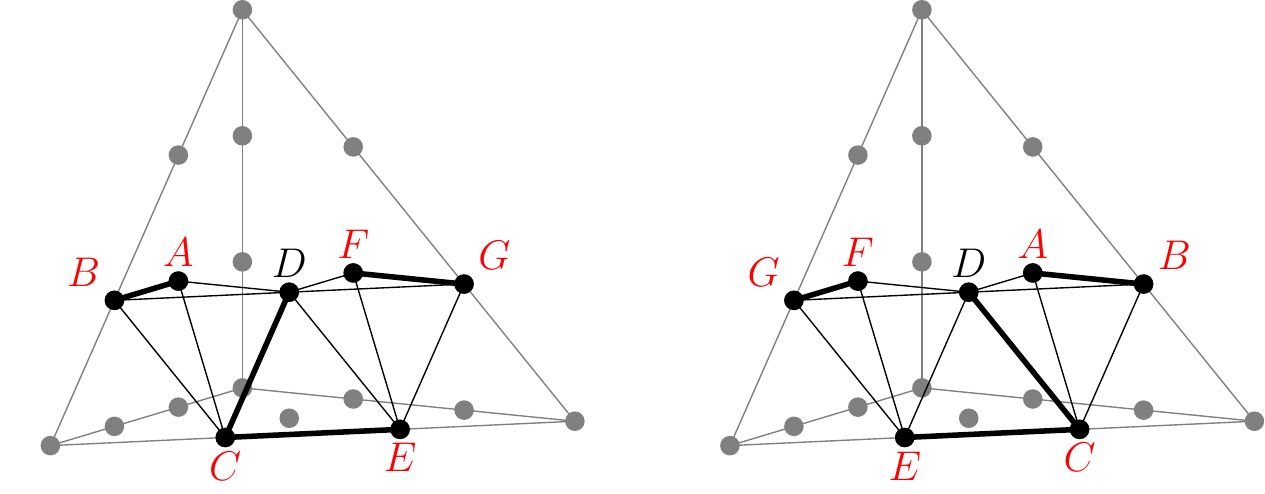}
\caption{Two distinct 3D motifs in the honeycomb triangulation that are 
supported by the same set of vertices (cf.~Example \ref{ex:honeycomb:alternate3D}).
Exit edges are marked.
 \label{fig:honeycomb:motif3D}}
\end{figure}

\begin{example}\label{ex:honeycomb:alternate3D}
The line $L$ in Example \ref{ex:honeycomb:motif3D}
gives an occurrence of the motif 3D in the honeycomb triangulation.
The corresponding simplicial map is given by 
\begin{equation}\label{eq:honeycomb:motif3D}
  \begin{matrix}
    A=11, \ B=17, \ C=18, \ D=14, \ E=15, \ F=5, \ G = 8, \\ i=3, \,\, j=2, \,\, k=0, \,\, l=1.
  \end{matrix}      
\end{equation}
This uses our fixed ordering of the lattice points in $3\Delta_3$, so the vertices are
\begin{small}
  \[
    A=(1101) ,\,\,  B=(0201) ,\,\,  C=(0210)  ,\,\, D=(0111) , \,\,  E=(0120)  ,\,\, F=(1011)  ,\,\, G=(0021) .
  \]
\end{small}
The left diagram in Figure~\ref{fig:honeycomb:motif3D} helps in
verifying the conditions from Table~\ref{Table:dualmotif}:
\[
  CE \subset F_3 ,\ AB \subset F_2 ,\ DE \subset F_0 ,\ FG \subset F_1 , \
  CD \subset \{x_2=1\} ,\ DE \subset \{ x_1=1 \}.
\]
The motif \eqref{eq:honeycomb:motif3D}  is made visible in 
Example \ref{ex:honeycomb:motif3D}
by the line $L$ in the surface $S$.
The motif occurrence is seen in
the covering certificates (\ref{eq:CC}) given by
Algorithm~\ref{algo:line-on-hypersurface}.

The above occurrence is special in that the exit edge $\{15,18\}$ lies in the edge
$F_0\cap F_3$ of  $3\Delta_3$.
We can relabel the  points and the exits as follows:
\[
  \begin{matrix}
    A=5, \ B=8, \ C=15, \ D=14, \ E=18, \ F=11, \ G = 17, \\ \ i=3, \,\, j=1, \,\, k=0, \,\, l=2.
  \end{matrix}
\]
This is another occurrence of a 3D motif in $T$,
shown on the right in Figure~\ref{fig:honeycomb:motif3D}.

In conclusion, the same subcomplex of the honeycomb triangulation 
supports two distinct occurrences of the motif 3D.
However, it is impossible for both to be visible in the same
cubic surface. 
To ascertain whether an occurrence of a motif is visible in a specific cubic surface is our problem in Section~\ref{sec:schlaefli}.
\end{example}

We now show all motif occurrences in a given triangulation. As it stands,
 Algorithm~\ref{algo:motif-occurrences}  is too na\"{\i}ve to be useful.
The number of vertices of a motif varies between four (type 3I) and eight (type 3F).
For the 3F motif alone we would need to enumerate and check $20^8 = 25.6\cdot 10^9$ 
potential simplicial maps into~$T$.

\begin{algorithm}[h]
  \caption{Finding all motif occurrences}
  \label{algo:motif-occurrences}
  \begin{algorithmic}[1]
    \Require{Unimodular regular triangulation $T$ of $3\Delta_3$.}
    \Ensure{The list of all occurrences of motifs in $T$.}
    \ForEach{motif $M$}
    \ForEach{map $\motif$ from the vertices of $M$ to the 20 lattice points in $3\Delta_3$}
    \If{$\motif$ is simplicial into $T$ and the conditions in Table~\ref{Table:dualmotif} are satisfied}
    \State output $\motif$.
    \EndIf
    \EndFor
    \EndFor
  \end{algorithmic}
\end{algorithm}

In practice, it is  essential to exploit symmetries and other simplifications.
A \emph{symmetry} of a motif $\motif$ is a simplicial bijection from the labeled simplicial complex $\motif$ to itself such that the conditions in the third column of Table~\ref{Table:dualmotif} are preserved.
Two symmetric occurrences of a motif yield the same line in a given tropical surface (or none).
The symmetries of a motif form a group.
The following lemma is derived by direct inspection from
the data in Table~\ref{Table:dualmotif}.

\goodbreak

\begin{lemma}
  The ten motifs of tropical cubic surfaces have the following symmetry groups.
In each case,  generators $g_1, g_2, \ldots$ and a  description are given: \begin{small}
  \begin{itemize}
  \item[(3A)] $g_1 = (E \ F)$.
    Cyclic group of order $2$.
  \item[(3B)] $g_1 = (B \ C)(i \ j)$, $g_2=(A \ F)(B\ D)(C\ E)(i \ k)(j \ l)$. 
    Dihedral group of order~$8$.
  \item[(3C)] $g_1=(B \ C)(i\ j)$, $g_2=(D\ E)$, $g_3=(F\ G)$.
    Elementary abelian group of order $8$.
  \item[(3D)] $g_1=(A\ B)$, $g_2=(F\ G)$.
    Elementary abelian group of order $4$.
  \item[(3E)]
    $g_1=(B\ C)(i\ j)$, $g_2=(D\ E)$, $g_3=(B\ C)(D\ F)(E\ G)(i\ j)(k\ l)$. 
    Nonabelian group~of order $16$: direct product of an order $2$ group $\langle g_1 \rangle$
        and a dihedral group $\langle g_2,g_3 \rangle$ of order~$8$.
  \item[(3F)] $g_1=(A \,B)$, $g_2=(C\, D)$, $g_3=(E\, F)$, $g_4=(G\, H)$, 
    $g_5=(A\, H)(B\, G)(C\, F)(D\, E)(i\ k)(j\ l)$.
    Nonabelian group of order $32$.
    Here $g_1,g_2,g_3,g_4$ span an abelian subgroup of order~$16$.
  \item[(3G)] $g_1=(A\ B)$, $g_2=(C\ D)$, $g_3=(E\ F)$.
    Elementary abelian group of order~$8$.
  \item[(3H)] $g_1=(A\ B)$, $g_2=(C\ D)(k\ l)$.
    Elementary abelian group of order $4$.
  \item[(3I)] $g_1=(A\ B)$, $g_2=(C\ D)$.
    Elementary abelian group of order $4$.
  \item[(3J)] $g_1=(B\ C)$, $g_2 =(D\ E)$.
    Elementary abelian group of order $4$.
  \end{itemize}
  \end{small}
\end{lemma}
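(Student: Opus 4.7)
The plan is to verify each of the ten cases by direct combinatorial inspection. For each motif $\motif$, I first read off from Table~\ref{Table:dualmotif} the underlying labeled simplicial complex: its vertex set $\{A,B,\ldots\}$, its maximal simplices (the dual motif picture in the middle column), and its four exit edges together with their labels $i,j,k,l$. A symmetry is required to be a simplicial bijection $\motif\to\motif$ preserving all conditions in the third column; the conditions attach additional data to specific edges (containment in a facet $F_m$, containment in a hyperplane $\{x_i+x_j=1\}$ or $\{x_l=1\}$, inequalities like $A\neq E,F$, etc.), so a symmetry must permute these labels compatibly.

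For each motif, my strategy is first to compute the full simplicial automorphism group of the underlying complex, ignoring labels and conditions. Since each motif has between four and eight vertices, this group can be enumerated by direct inspection. Second, I cut down to the subgroup preserving the four marked exit edges as a set, which induces a permutation of the label set $\{i,j,k,l\}$. Third, I check which such automorphisms are compatible with the facet assignments and hyperplane conditions. For example, in 3B the transposition $(B\,C)(i\,j)$ swaps the two exits $AB$ and $AC$ while swapping their facet labels, and it preserves the symmetric condition $BC\subseteq\{x_i+x_j=1\}$; the involution $g_2$ implements the visible $\mathbb{Z}/2$-reflection of the motif exchanging $A\leftrightarrow F$, $(BD)\leftrightarrow(CE)$ and the pairs $\{i,k\}$, $\{j,l\}$. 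One verifies directly that $g_1$ and $g_2$ satisfy the relations of $D_4$ and hence generate a group of order~$8$.

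After confirming that the listed generators do preserve every condition, the remaining task is to determine the abstract group structure and the order. This is done case-by-case by checking the orders of the $g_i$ and their commutation relations: in types 3C, 3D, 3G, 3H, 3I, 3J the generators are pairwise commuting involutions acting on disjoint sets of labels, so the group is elementary abelian of order $2^r$; in 3A it is simply $\mathbb{Z}/2$; in 3B it is dihedral; in 3E the involution $g_1$ commutes with $\langle g_2,g_3\rangle\cong D_4$, giving an order-$16$ group $\mathbb{Z}/2\times D_4$; in 3F the four commuting involutions $g_1,\dots,g_4$ generate an elementary abelian subgroup of order $16$, and conjugation by $g_5$ realizes the outer $\mathbb{Z}/2$ that yields a nonabelian extension of order $32$.

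The main obstacle is \emph{completeness}: one must argue that no additional symmetries exist beyond those generated by the $g_i$. For the small motifs this is immediate. For 3E and especially 3F, the underlying complex is symmetric enough that a priori many more permutations could be automorphisms; to exclude them I would go through the candidate automorphisms coming from the unlabeled automorphism group of the complex and rule out each one by exhibiting a specific condition from Table~\ref{Table:dualmotif} that it violates (for instance, a permutation that sends an exit edge contained in a single facet $F_i$ to an edge contained in the intersection $F_k\cap F_l$ of two facets cannot extend to a valid symmetry). Once this check is carried out, comparing the number of surviving automorphisms with the order of the group generated by the $g_i$ completes the proof.
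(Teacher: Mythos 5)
Your proposal is correct and matches the paper's approach: the paper gives no detailed argument at all, stating only that the lemma "is derived by direct inspection from the data in Table~\ref{Table:dualmotif}," which is exactly the case-by-case inspection of the labeled complexes, exits, and conditions that you outline (including the completeness check you rightly flag as the only delicate point).
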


We next show that occurrences of motifs are generally not embeddings.

%
%
%
%

\begin{example}\label{ex:motif-no-embedding}
  The motif 3F occurs in the triangulation \eqref{eq:typical:triangulation} via the labeling
  \begin{small}
    \[
      A=15 , \,  B=18 ,\, C=11 ,\, D=17 ,\, E=14 ,\, F=15 ,\, G=2 ,\, H=9 , \ i=2 ,\, j=3 ,\, k=0 ,\, l=1 .
    \]
  \end{small}
In this occurrence,  $A$ and $F$ are mapped to the same point, labeled by~$15$.
\end{example}

To obtain Theorem~\ref{thm:114}, we developed a highly efficient version of 
Algorithm~\ref{algo:motif-occurrences}, we implemented it in \polymake, and
we applied it to millions of triangulations. This required substantial speed-ups,
based on structural constraints that control the combinatorial explosion.
In the rest of this section, we present a sample of such constraints, and we discuss how they are used.

\begin{lemma}\label{lem:motif3A}
  Vertex $A$ is distinct from $E$ and $F$ in any occurrence of motif 3A.
\end{lemma}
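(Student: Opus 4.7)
The plan is to show that the inequalities $A\neq E$ and $A\neq F$ follow from the face-containment conditions for motif 3A alone, by pinning down the lattice coordinates of the image vertex $A$ and comparing them against the constraints forced on $E$ and $F$.

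First, I would read off coordinates of $A$ from the necessary conditions in Table~\ref{Table:dualmotif}. The exit condition $AB\subseteq F_i$ forces $x_i(A)=0$, and $AC\subseteq F_k$ forces $x_k(A)=0$. Evaluating the hyperplane condition $AD\subseteq\{x_i+x_j=1\}$ at the endpoint $A$ then yields $x_i(A)+x_j(A)=1$, whence $x_j(A)=1$. Since every lattice point of $3\Delta_3$ satisfies $x_i+x_j+x_k+x_l=3$, the remaining coordinate is $x_l(A)=2$. In particular, $A$ is the unique lattice point of $3\Delta_3$ with coordinates $(x_i,x_j,x_k,x_l)=(0,1,0,2)$.

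On the other hand, the condition $EF\subseteq F_l$ forces $x_l(E)=x_l(F)=0$. If we had $A=E$ or $A=F$, this would force $x_l(A)=0$, contradicting the value $x_l(A)=2$ computed above. Hence $A\neq E$ and $A\neq F$.

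I do not anticipate a real obstacle here: the argument is just careful bookkeeping with the face-containment data read from Table~\ref{Table:dualmotif}. The practical point of the lemma is computational; having the distinctness available \emph{a priori} lets a refined version of Algorithm~\ref{algo:motif-occurrences} prune candidate simplicial maps for motif 3A before invoking the more expensive consistency checks.
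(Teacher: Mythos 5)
Your proof is correct and follows essentially the same coordinate bookkeeping as the paper's own argument: the paper assumes $A=E$ or $A=F$, notes that then $x_i(A)=x_k(A)=x_l(A)=0$ and $x_j(A)=1$, and contradicts the coordinate sum $3$, while you equivalently compute $x_l(A)=2$ directly and contrast it with $x_l(E)=x_l(F)=0$ forced by $EF\subseteq F_l$. This is just a rearrangement of the same contradiction, so no further comparison is needed.
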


\begin{proof}
  If $A$ coincides with $E$ or $F$ then $A$ has coordinates $i$, $k$ and $l$ equal to zero.
  Moreover, the condition $AD \subseteq \{x_i+x_j=1\}$ implies that the coordinate $j$ is equal to one.
  This is impossible, since the four coordinates sum to three. 
\end{proof}




Our strategy for enumerating motif occurrences is to find the possible ways in which the 
simplices of a motif are mapped into the given triangulation.
This leads to more book-keeping in Algorithm~\ref{algo:motif-occurrences}, 
to be used for shortcuts.
We exploit the following features in the various motifs.
A tetrahedron $T$ is called \emph{sided} if it has one edge on a
facet $F_i$ of $3\Delta_3$ and the opposite edge lies on the plane $x_i=1$. The associated tropical line  
contains the vertex of the surface dual to $T$ in the interior of the ray in direction $\omega_i$. 
We call $T$ \emph{split} if it has two opposite edges with prescribed exits. Here there are two possibilities. The line has two adjacent rays 
in directions given by the exits, and one ray contains in its interior the vertex dual to the split tetrahedron. Or the bounded edge contains the vertex dual to the split tetrahedron in its interior, and the rays in directions given by the exits are not adjacent.  We say that $T$ is \emph{centered} if the constraints in Table~\ref{Table:dualmotif} 
 induce a bijection between its vertices and the facets of $3\Delta_3$. Its dual vertex lies in the interior of the bounded edge of the tropical line. Finally, a triangle in a motif is \emph{dangling} if it has two edges with required exits. The tropical line has a vertex in the interior of an edge of the surface. The two rays adjacent to that vertex have direction given by the exits of the dangling triangle. 

The features we defined above occur in the ten motifs as follows:
\begin{itemize}
\item The following tetrahedra are sided: $CDEF$ in motif 3A, $DEFG$ in 3C, $BCDE$ and $BCFG$ in 3E, $ABCD$ and $EFGH$ in 3F, $CDEF$ in 3G, $ABCD$ and $ABCE$ in 3J.
\item Tetrahedron $DEFG$ in 3D is split; so are $CDEF$ in 3F and $CDEF$ in 3G.
\item Tetrahedron $BCDE$ in motif 3B is centered.
\item Triangle $ABD$ in motif 3A is dangling, likewise $ABC$ and $DEF$ in 3B, $ABC$ in 3C, $CDE$ in 3D, $ABC$ in 3E, and $CDE$ in 3H.
\end{itemize}




Our strategy for Theorem \ref{thm:114}
is to first enumerate the features of a  triangulation, i.e.~its 
sided, split and centered tetrahedra, and its dangling triangles.
This is combined with searching for 
occurrences of a motif by local extensions.

We illustrate this for the 3A motif with a heuristic estimate for the number of subcases arising.
Let $T$ be the triangulation in \eqref{eq:typical:triangulation} and in Example~\ref{ex:typical:all-motifs} below.
We start out by finding the candidates for the sided tetrahedron $CDEF$, 
with the exit $EF$ on facet $F_l$.
Considering all labelings,  there are $114$ choices for this in $T$.
Next we need to find the candidates for $A$.
Here it suffices to consider those which are in the link of the edge $CD$.
For instance, the link for $\{C,D\} = \{15,11\}$ has six vertices. The number six
appears to be typical and we use this number for our estimate.
By Lemma~\ref{lem:motif3A}, $A$ must be distinct from $E$ and $F$, reducing the number of candidates to four. 
We further exclude any $A$ where $AC$ does not lie in the boundary of $3\Delta_3$.
For the remaining ones we try the three directions other than $l$, which is already fixed.
The only item missing is the vertex $B$.
Assuming, e.g., $A=18$ we need to check three candidates in the link of $AD$ (four minus one for $C$, because $A\neq C$) and two remaining exits.
This leads to $114 \cdot 4 \cdot 3 \cdot 3 \cdot 2 = 8208$ cases, including all possible labelings.
In fact, the enumeration  is even faster, as many of these cases can be ruled out early
while the various conditions in Table~\ref{Table:dualmotif} are being checked.
Summing up, the number of subcases considered by this approach is much smaller than 3.3 million subcases for one 3A motif one sees in a na\"{\i}ve backtracking search.


%
%


\section{Schl\"afli Cones}
\label{sec:schlaefli}

In Section \ref{sec:motifs} we studied the occurrences of  motifs in the $\numberOfOrbits$ types of regular unimodular triangulations of $3 \Delta_3$.
Their number per type ranges between $27$ and $128$. 
In this section we focus on individual smooth tropical cubic surfaces from a fixed secondary cone $\seccone(T)$.
Every tropical line on a generic surface gives a motif that occurs in $T$.
But the converse is not true. An occurrence of a motif need not contribute a tropical line to a given surface.

Let $T$ be a regular unimodular triangulation of $3\Delta_3$.
Each point $C$ in the  open secondary cone $\seccone(T)$ specifies a smooth tropical cubic surface $S_C$
which is dual to the triangulation $T$.
Given an occurrence $\motif$ of a motif in  $T$, we say that $\motif$ is \emph{visible in  $S_C$} if there is a tropical line $L$ in $S_C$ that has the dual complex $\motif$. 
We write $\mathcal{M}_C$ for the set of all motifs that are visible in  $S_C$.

We regard two vectors $C$ and $C'$ in $\seccone(T)$ as \emph{equivalent} if $\mathcal{M}_C = \mathcal{M}_{C'}$.
Each equivalence class is a finite union of relatively open convex polyhedral cones in $ \RR^{20}$.
The full-dimensional cones among these are the \emph{Schl\"afli cones}.
Each facet of a Schl\"afli cone is defined by a linear form in $\ZZ[c_0,c_1,\ldots,c_{19}]$.
This linear form is unique up to scaling.
We identify this linear form with the hyperplane it defines, and we call it a \emph{Schl\"afli wall} for the type~$T$.
The collection of all Schl\"afli walls defines a hyperplane arrangement in $\RR^{20}$.

The \emph{Schl\"afli fan} of the combinatorial type $T$ is the subdivision of $\seccone(T)$ induced by the Schläfli walls of type $T$.
Every maximal cone of the Schl\"afli fan is fully contained in a Schl\"afli cone.
Hence, the set $\mathcal{M}_C$ is constant for all surfaces~$S_C$ in a fixed maximal cone of the Schl\"afli fan.
A tropical cubic surface~$S_C$ is \emph{generic} if its coefficient vector $C$ is in the interior of a Schl\"afli~cone. 
Notice that, in general, a Schl\"afli cones need not be a cone of the Schl\"afli fan.

There are $\numberOfOrbits$ distinct Schl\"afli fans. 
Algorithm \ref{algo:visibility} finds their Schl\"afli walls. 
We coded this in \macaulay \cite{M2}.
Here is one of the results we found:

\begin{theorem} \label{thm:1426}
  For each of the $1426$ types in Theorem \ref{thm:114}
   with exactly $27$ motifs, the secondary cone remains undivided in the  Schl\"afli fan.
  Among these, $1396$ types feature isolated tropical lines only.
The remaining $30$ have precisely one occurrence of motif 3I; in particular, motif 3J does not occur at all.
\end{theorem}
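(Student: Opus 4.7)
The plan is primarily computational, building directly on the infrastructure already in place. First, I would query the database from Theorem~\ref{thm:114} to extract the $1426$ orbit representatives whose motif count equals $27$. For each such representative $T$, I would tabulate, motif by motif, the multiplicities returned by Algorithm~\ref{algo:motif-occurrences}. This tabulation immediately partitions the $1426$ orbits according to which motifs occur; the claimed split into $1396$ types supporting only isolated motifs from $\{3\mathrm{A}, \ldots, 3\mathrm{H}\}$ versus $30$ types carrying exactly one occurrence of $3\mathrm{I}$ and no $3\mathrm{J}$ is then read off the output.

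Second, for the statement that each secondary cone $\seccone(T)$ is undivided in the Schl\"afli fan, I would invoke Algorithm~\ref{algo:visibility} (the \macaulay{} implementation) on each of the $1426$ representative cones and verify that the list of Schl\"afli walls it returns is empty. Mechanically, this reduces to checking that, for each of the $27$ motif occurrences identified in the first step, the linear program which certifies visibility of an associated tropical line is feasible uniformly over the open secondary cone.

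The conceptual reason to expect this is that the tropical count of $27$ matches the classical Cayley--Salmon count of lines on a smooth cubic. When the motif count of $T$ attains this generic value, every occurrence is forced to contribute a tropical line for every coefficient vector in the open secondary cone, and no room is left for the degeneracies that would produce Schl\"afli walls. For the $30$ types with one $3\mathrm{I}$ occurrence, the one-parameter family of lines is cut out by a compatibility condition on the tropical Pl\"ucker vector, and one expects this condition to already be a consequence of the facet inequalities of $\seccone(T)$, so that the family persists throughout the cone without producing any subdivision.

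The main obstacle is not the enumeration itself but certifying that the visibility LP for a given motif occurrence is feasible \emph{uniformly} across the interior of $\seccone(T)$, rather than merely at the one interior point $C$ computed via \scip{}. I would address this by showing, for each occurrence, that the facet inequalities of $\seccone(T)$ already imply the corresponding visibility inequalities; Algorithm~\ref{algo:visibility} is designed to produce precisely such certificates. Once this uniform feasibility is established simultaneously for all $27$ occurrences in each of the $1426$ representatives, no Schl\"afli walls exist inside the secondary cone, which together with the motif tabulation of the first step completes the proof.
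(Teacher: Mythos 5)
Your proposal is correct and takes essentially the same route as the paper: the theorem is established by exactly this computation, namely tabulating the motif occurrences for the $1426$ types with $27$ occurrences and running Algorithm~\ref{algo:visibility} on each to verify that every visibility cone coincides with the secondary cone, so that no Schl\"afli walls arise. Your heuristic appeal to the classical count of $27$ lines is not a proof and is not used in the paper, but you correctly treat it as motivation only and rest the argument on the computational certificates of redundancy.
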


The situation is different for many triangulations $T$ with more than $27$ motif occurrences.
Here, the Schl\"afli fan is nontrivial; it  does divide $\seccone(T)$
into smaller cones, according to which tropical lines lie on the various cubic surfaces.
Each Schl\"afli wall arises (non-uniquely) from some
motif $\motif$ that occurs in~$T$. If one crosses
from one Schl\"afli cone to a neighboring one through
a shared facet, then the set $\mathcal{M}_C$ 
of visible motifs changes.
If a motif $\motif$ is no longer visible then the Schl\"afli wall gives a linear inequality that
is necessary for $\motif$ to be visible. We write
$\mathcal{W}_\motif$ for the set of Schl\"afli walls arising from the motif~$\motif$.

\begin{lemma} \label{lemma:evident} Let $\motif$ be an occurrence of a motif 3F, 3G or 3I in a type $T$.
Then $\mathcal{W}_\motif = \emptyset$. In other words,
$\motif$ is visible in every tropical cubic surface of  type $T$.
\end{lemma}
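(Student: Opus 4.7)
The plan is to exhibit, for any occurrence $\motif$ of 3F, 3G or 3I and any coefficient vector $C\in\seccone(T)$, a tropical line $L\subseteq S_C$ whose dual complex is $\motif$. If this construction needs no further condition on $C$ beyond those defining $\seccone(T)$, the wall set $\mathcal{W}_\motif$ is empty. My strategy leverages the structural features of Section~\ref{sec:motifs}: \emph{sided} and \emph{split} tetrahedra for 3F and 3G, and the \enquote{double-boundary} edge $CD\subseteq F_k\cap F_l$ for 3I.

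I would start with 3I, which is the simplest. Since $CD\subseteq F_k\cap F_l$, the 2-cell of $S_C$ dual to the edge $CD$ is unbounded in the directions $\omega_i,\omega_j$ and carries on its boundary the vertex of $S_C$ dual to $ABCD$, from which rays in directions $\omega_k,\omega_l$ emanate. Inside this 2-cell one can always fit a non-degenerate tropical line of labeled type $ij|kl$, and this configuration is present for every $C\in\seccone(T)$. For 3G and 3F, the sided tetrahedra pin one vertex of $L$ onto a specified ray of $S_C$, and the split tetrahedron $CDEF$ pins the remaining vertex or the bounded edge; the other incidences of $L$ with $S_C$ are prescribed by simplices of $T$ via the conditions in Table~\ref{Table:dualmotif}.

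To convert this into a rigorous argument I would run Algorithm~\ref{algo:line-on-hypersurface} on the candidate line and observe that every Covering Subroutine call returns \enquote{yes}, with coincidence partitions whose non-singleton blocks are exactly the images in $T$ of simplices of $\motif$. Each such block expresses the tropical minimum being attained on a face of the Newton polytope that is a simplex of $T$; by definition of $\seccone(T)$ this is automatic, so no further inequality on $C$ arises. The main obstacle is to carry out this check uniformly across the many symmetry-equivalent labelings of each motif. I would handle this by translating each structural feature into a prescribed block of the coincidence partition at a specific probed point of $L$, and observing that the translation is invariant under the motif's symmetry group; once this dictionary is in place, the three cases reduce to mechanical bookkeeping.
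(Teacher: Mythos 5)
Your overall plan (construct, for every $C\in\seccone(T)$, a line realizing $\motif$ by exploiting the exits and the sided/split structure) is the same in spirit as the paper's argument for 3F, and the paper simply cites \cite[Proposition~23]{PV} for 3G and 3I. However, the step you offer as the rigorous core has a genuine gap. You claim that running Algorithm~\ref{algo:line-on-hypersurface} on ``the candidate line'' must return yes in every Covering Subroutine call because each non-singleton block of the coincidence partition is a simplex of $T$, and that ``by definition of $\seccone(T)$ this is automatic, so no further inequality on $C$ arises.'' That inference is invalid: membership of $C$ in $\seccone(T)$ only guarantees that every simplex of $T$ is a cell of the regular subdivision induced by $C$, i.e.\ that the minimum is attained on that simplex \emph{somewhere} in $\RR^3$; it does not guarantee that the minimum is attained there at the specific points of your candidate line, whose vertices themselves depend on $C$ (both the slopes \emph{and} the constant terms of the restricted linear forms depend on the base points). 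If your reasoning were correct as stated, it would show that \emph{every} motif occurrence is visible on every surface of type $T$, which contradicts the nonempty wall sets the paper computes for occurrences of 3A, 3B, 3D and 3H (Tables~\ref{tab:ex-motifs:partial} and \ref{tab:ex-motifs:hardly}) and the nontrivial Schl\"afli fans behind Theorem~\ref{thm:1426}. So the argument proves too much and cannot be right; the actual work is to explain why 3F, 3G, 3I are special.

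Concretely, what is missing is the construction of the line as a function of $C$ together with the verification that it always closes up. For 3F, say, the vertices of $S_C$ dual to $ABCD$, $CDEF$, $EFGH$ are trespassed by segments in directions $\omega_j$, $\omega_i+\omega_j$, $\omega_l$; the vertex $q_{ij}$ of $L$ must be the intersection of two rays inside the $2$-cell dual to $CD$ (and $q_{kl}$ likewise in the cell dual to $EF$), and one must check that this intersection exists with the correct sign conditions for \emph{all} $C\in\seccone(T)$ --- this is exactly where the conditions $CD\subseteq\{x_j=1\}\cap F_i$, $EF\subseteq\{x_l=1\}\cap F_k$ and the exits enter, and it is the content of the paper's short geometric argument (and of \cite[Proposition~23]{PV} for 3G and 3I). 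Your proposal gestures at this (``pinning'' via sided/split tetrahedra) but never carries it out. A smaller but real slip: in your 3I discussion the roles of $\{i,j\}$ and $\{k,l\}$ are interchanged --- since $CD\subseteq F_k\cap F_l$, the $2$-cell dual to $CD$ is unbounded in the directions $\omega_k,\omega_l$, while the rays in directions $\omega_i,\omega_j$ emanate from the vertex dual to $ABCD$ through the exits of that tetrahedron; the second vertex of the line then moves freely along the direction $\omega_k+\omega_l$ inside that cell, which is what makes 3I a family and globally visible.
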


\begin{proof} This was shown for the motifs 3G and 3I in \cite[Proposition 23]{PV}.
Now consider the motif 3F.
Suppose that $\motif$ is an occurrence  of 3F. The three tetrahedra
  $ABCD$, $CDEF$ and $EFGH$ are
  dual to three vertices  of $S_C$. The necessary conditions on the  edges in  
  Table~\ref{Table:dualmotif}  allow trespassing segments 
  respectively in the directions $\omega_j$, $\omega_i+\omega_j$ and $\omega_l$. 
  Thanks to the exits of the three tetrahedra, these segments can 
  always be completed to a   tropical line, irrespective of the specific values of the 
  parameters $c_i$.
\end{proof}

We now discuss how the set of walls $\mathcal{W}_\motif$ can be computed
for the other motifs. The basic idea is this. Given $\motif$, we compute a 
tropical line that matches the combinatorics in~$\motif$.
The line is uniquely determined by its two vertices. Their coordinates
 are linear forms in $C=(c_0,c_1,\ldots,c_{19})$. 
 In this section we use lowercase letters $c_i$ instead of uppercase letters $C_i$
 for the coordinates of the tropical coefficient vector $C$, 
 so as to make our tables more readable.
  
\begin{algorithm}[h] 
  \caption{Computing the visibility cone and Schl\"afli walls of a motif}
  \label{algo:visibility}
  \begin{algorithmic}[1]
    \Require{Secondary cone of a unimodular regular triangulation $T$ of $3\Delta_3$, 
    and an occurrence $\motif$ of a motif in $T$}
    \Ensure{Visibility cone and Schl\"afli walls of $\motif$}
    \State Compute the vertices of the tropical line $L$ dual to $\motif$.
       \ForEach{vertex $V$ of $L$}
       \State Substitute the coordinates of $V$ in the $20$ monomials of the tropical cubic polynomial.
       \State Get linear inequalities by requiring that $V$ lies on the prescribed cell of the surface.
       \EndFor
      \State Construct the cone defined by these linear inequalities.
      \State Compute the visibility cone by intersecting that cone with the secondary cone.
      \State Remove redundant facets.
       \State Output the visibility cone and Schl\"afli walls.
  \end{algorithmic}
\end{algorithm}

  If the $c_i$ take on values in $\RR$ then the
   tropical line may or may not be contained in $S_C$. 
We require that it lies in $S_C$ as prescribed by $\motif$.
Each vertex must lie on a cell of $S_C$ that is specified by  the tropical cubic polynomial.
These linear forms must be equal and bounded above by the
other ones. We consider these linear inequalities together with those that define the secondary cone.  They define the  \emph{visibility cone} of $\motif$ in $\seccone(T)$.  The irredundant linear inequalities for this cone give us the linear forms in $\mathcal{W}_\motif$.  A Schl\"afli cone is the intersection of the full dimensional visibility cones of the
visible motifs.

If all  linear inequalities we found are redundant, then the  visibility cone  equals the secondary 
cone.  In that case, the motif is visible in each surface $S_C$ with $C \in \seccone(T)$,
and  the motif   is \emph{globally visible}.  This holds in
 Theorem \ref{thm:1426}.

\begin{table}[tbh]\centering
  \caption{The triangulation $\#\idHampeJoswigTriangulation$ from (\ref{eq:typical:triangulation}) has $24$ globally visible motifs.}
  \label{tab:ex-motifs:global}
\begin{tiny} 
\begin{tabular*}{.75\linewidth}{@{\extracolsep{\fill}}ccc@{}}
  \toprule
  Index & Points & Exits   \\
  \midrule
  \multicolumn{3}{c}{Motifs 3B}  \\
  \midrule
  0&9, 15, 7, 1, 18, 19 & 0, 1, 2, 3   \\
  \midrule
  \multicolumn{3}{c}{Motifs 3D} \\
  \midrule
  1& 9, 15, 2, 11, 1, 9, 15 & 1, 0, 2, 3   \\
  2& 3, 14, 2, 11, 1, 15, 18 &  1, 0, 2, 3 \\
  3& 9, 15, 2, 11, 1, 15, 18 &  1, 0, 2, 3  \\
  4& 14, 15, 2, 11, 1, 15, 18 & 1, 0, 2, 3 \\ 
  5& 3, 14, 2, 11, 1, 18, 19 & 1, 0, 2, 3  \\ 
  6& 9, 15, 2, 11, 1, 18, 19 & 1, 0, 2, 3 \\ 
  7&14, 15, 2, 11, 1, 18, 19 & 1, 0, 2, 3   \\ 
  8&9, 15, 1, 11, 2, 3, 14 & 1, 3, 2, 0   \\  
  9&9, 15, 1, 11, 2, 14, 15 & 1, 3, 2, 0   \\ 
  10& 9, 15, 1, 11, 2, 9, 15 &  1, 3, 2, 0    \\  
  11& 2, 3, 14, 11, 17, 15, 18 & 0, 1, 2, 3   \\ 
  12& 2, 3, 14, 11, 17, 18, 19 & 0, 1, 2,  3   \\
  \midrule
   \multicolumn{3}{c}{Motifs 3F}  \\
  \midrule
  13& 15, 18, 11, 17, 14, 15, 2, 9 & 2, 3, 0, 1 \\
  14& 18, 19, 11, 17, 14, 15, 2, 9 & 2, 3, 0, 1\\
   \midrule
   \multicolumn{3}{c}{Motifs 3G}  \\
  \midrule
 15 & 9, 15, 2, 11, 3, 14 &1, 3, 2, 0 \\
16 & 9, 15, 2, 11, 14, 15 & 1, 3, 2, 0 \\
17 & 9, 15, 1, 11, 15, 18 & 0, 1, 2, 3 \\
18 & 9, 15, 1, 11, 18, 19 & 0, 1, 2, 3 \\
  \midrule
  \multicolumn{3}{c}{Motifs 3H}  \\
  \midrule
19&7, 15, 1, 18, 19 & 0,  1, 2, 3   \\
 20&9, 15, 2, 14, 3 &  1, 3, 2, 0  \\
    \midrule
  \multicolumn{3}{c}{Motifs 3I}  \\
  \midrule
21& 1, 11, 9, 15 & 1, 2, 0, 3\\
 22& 2, 11, 9, 15 & 1, 2, 0, 3 \\
    \midrule
  \multicolumn{3}{c}{Motifs 3J} \\
    \midrule
  23&11, 9, 15, 1, 2 & 0, 3, 1, 2  \\
  \bottomrule
\end{tabular*}
\end{tiny}
\end{table}

If  Algorithm \ref{algo:visibility} finds irredundant linear forms,
then we distinguish two cases, according to the dimension of the visibility cone. If the visibility cone is full dimensional, then the motif is \emph{partially visible}. Finally, a visibility cone might not be full dimensional. This  means that it is contained in a linear space of positive codimension.  A motif with visibility cone of  lower dimension is not visible in generic surfaces. We therefore call it \emph{hardly visible}. 

We now illustrate these concepts for the tropical cubic surface from \eqref{eq:typical:triangulation}.

\begin{table}[h]\centering
\caption{The triangulation $\#\idHampeJoswigTriangulation$ from (\ref{eq:typical:triangulation}) has $18$ partially visible motifs.}
  \label{tab:ex-motifs:partial}
\begin{tiny}
\begin{tabular*}{\linewidth}{@{\extracolsep{\fill}}cccc@{}}
\toprule
  Index & Points & Exits & Schl\"afli walls \\
 \midrule
  \multicolumn{4}{c}{Motifs 3A} \\
  \midrule
0&18, 17, 15, 11, 2, 9 & 0, 2, 3, 1 &  \begin{tabular}{c}$-c_2+c_9+c_{11}-c_{15}+c_{17}-c_{18}$, \\
 $ c_2-c_9-c_{11}+2c_{15}-c_{17}-c_{18}+c_{19} $ \end{tabular}\\ \addlinespace[0.5em]
1&18, 19, 15, 11, 2, 9 & 3, 2, 0, 1 & $-c_2+c_9+c_{11}-c_{15}+c_{17}-c_{18}$ \\ 
2&18, 19, 15, 11, 2, 9 & 0, 2, 3, 1 & $ -c_2+c_9+c_{11}-2c_{15}+c_{17}+c_{18}-c_{19} $ \\  \addlinespace[0.5em]
3&18, 17, 15, 11, 1, 9 & 0, 2, 3, 1&  \begin{tabular}{c}$
c_1-c_9-2c_{11}+2c_{15}+c_{17}-c_{18}$,\\ $
 -c_1+c_9+2c_{11}-c_{15}-c_{17}-c_{18}+c_{19}$ \end{tabular}\\  \addlinespace[0.5em]
4&18, 19, 15, 11, 1, 9 & 3, 2, 0, 1 &$c_1-c_9-2c_{11}+2c_{15}+c_{17}-c_{18} $\\ 
5&18, 19, 15, 11, 1, 9 & 0, 2, 3, 1 & $ c_1-c_9-2c_{11}+c_{15}+c_{17}+c_{18}-c_{19} $\\ 
  \midrule
  \multicolumn{4}{c}{Motifs 3B}  \\
  \midrule
6&17, 18, 11, 1, 15, 7 & 0, 2, 1, 3  & \begin{tabular}{c} $ c_1-c_7+c_9-c_{11}-c_{15}+c_{18}$, \\ $-c_1+2c_{11}+c_{15}-c_{17}-2c_{18}+c_{19} $ \end{tabular} \\  \addlinespace[0.5em]
7&17, 18, 11, 1, 15, 9 & 0, 2, 1, 3 & \begin{tabular}{c} $-c_1+c_7-c_9+c_{11}+c_{15}-c_{18}$, \\ $-c_1+2c_{11}+c_{15}-c_{17}-2c_{18}+c_{19} $\end{tabular} \\   \addlinespace[0.5em]
8&19, 18, 11, 1, 15, 7 & 0, 2, 1, 3 & \begin{tabular}{c} $ c_1-c_7+c_9-c_{11}-c_{15}+c_{18}$, \\ $c_1-2c_{11}-c_{15}+c_{17}+2c_{18}-c_{19}$ \end{tabular} \\  \addlinespace[0.5em]
9&19, 18, 11, 1, 15, 9 & 0, 2, 1, 3 & \begin{tabular}{c} $-c_1+c_7-c_9+c_{11}+c_{15}-c_{18}$, \\ $c_1-2c_{11}-c_{15}+c_{17}+2c_{18}-c_{19}$ \end{tabular} \\ 
  \midrule
 \multicolumn{4}{c}{Motifs 3D} \\
 \midrule
10 &1, 9, 15, 11, 17, 15, 18 & 0, 1, 2, 3  & $-c_1+c_9+2c_{11}-2c_{15}-c_{17}+c_{18}$ \\  
11 & 2, 9, 15, 11, 17, 15, 18 & 0, 1, 2, 3 & $ c_2-c_9-c_{11}+c_{15}-c_{17}+c_{18} $ \\ 
12& 1, 9, 15, 11, 17, 18, 19 & 0, 1, 2, 3 & $-c_1+c_9+2c_{11}-2c_{15}-c_{17}+c_{18}$ \\ 
13 &2, 9, 15, 11, 17, 18, 19 & 0, 1, 2, 3 & $c_2-c_9-c_{11}+c_{15}-c_{17}+c_{18}$ \\
  \midrule
 \multicolumn{4}{c}{Motifs 3H} \\
 \midrule
14&18, 19, 1, 13, 4 & 0, 2, 1, 3 &  $ -c_4+c_7+c_{13}-2c_{18}+c_{19} $ \\  
15&11, 18, 1, 15, 9 & 0, 2, 1, 3 &$ -c_1+c_7-c_9+c_{11}+c_{15}-c_{18} $ \\ 
16&18, 19, 1, 13, 7 & 0, 2, 1, 3 &$c_4-c_7-c_{13}+2c_{18}-c_{19}$ \\ 
17&11, 18, 1, 15, 7 &  0, 2, 1, 3 & $c_1-c_7+c_9-c_{11}-c_{15}+c_{18}$ \\ 
  \bottomrule
  
\end{tabular*}
\end{tiny}
\end{table}

\begin{table}
\caption{The triangulation $\#\idHampeJoswigTriangulation$ from (\ref{eq:typical:triangulation}) has $9$ hardly visible motifs.}
  \label{tab:ex-motifs:hardly}
\begin{tiny}
\resizebox{\textwidth}{!}{
\begin{tabular*}{1.0\linewidth}{@{\extracolsep{\fill}}cccc@{}}
\toprule
 Index & Points & Exits & Schl\"afli walls \\
 \midrule
 \multicolumn{4}{c}{Motifs 3D} \\
\midrule
0 &3, 14, 2, 11, 1, 9, 15 & 1, 0, 2, 3 &  \begin{tiny}\begin{tabular}{c}  $ c_{12}-2c_{14}+c_{15}$, \\ $c_1-3c_5+2c_9+c_{11}+c_{14}-2c_{15}$,  \\ $ c_9-2c_{12}+3c_{14}-3c_{15}+c_{17} $,  \\ $c_3-2c_6+c_8$, \ \ $-c_3+2c_6-c_8$ \\ Equations: $c_2-c_6-c_{11}+c_{14}$, \\  $2c_3-3c_6+c_9$,  $c_2+c_3-2c_6-c_{11}+c_{15}$  \end{tabular} \end{tiny}\\  \addlinespace[0.6em]
1& 14, 15, 2, 11, 1, 9, 15 & 1, 0, 2, 3 & \begin{tiny} \begin{tabular}{c}$c_1-3c_5+2c_9+c_{11}+c_{14}-2c_{15}$, \\  $ -c_3+c_9+c_{12}+c_{14}-2c_{15}$ \\ Equation: $c_2-c_9-c_{11}-c_{14}+2c_{15}$ \end{tabular} \end{tiny}\\   \addlinespace[0.6em]
2& 15, 18, 1, 11, 2, 3, 14 & 1, 3, 2, 0  &\begin{tiny} \begin{tabular}{c} $-2c_6+c_8+2c_{12}-c_{17}$, \ $c_8-c_9-c_{17}+c_{18}$,  \\  $c_1-3c_5+c_9+c_{11}+c_{12}-c_{17}$, \\ $c_9-c_{12}-c_{15}+2c_{17}-c_{18}$, \\ Equations:  \\ $c_2-c_3-c_{11}+c_{12}$,   $2c_2-c_3-2c_{11}+c_{17}$, \\ $2c_2-c_3-2c_{11}+c_{14}-c_{15}+c_{18}$ \end{tabular}\end{tiny} \\   \addlinespace[0.6em]
3& 18, 19, 1, 11, 2, 3, 14 & 1, 3, 2, 0  & \begin{tiny} \begin{tabular}{c} $-2c_6+c_8+2c_{12}-c_{17}$, \\ $c_1-3c_5+c_9+c_{11}+c_{12}-c_{17}$, \\ $c_8-c_9-c_{17}+c_{18}$, \ \ \ $-c_1+c_{11}+c_{13}-c_{18}$, \\ $c_9-c_{12}+2c_{17}-3c_{18}+c_{19}$, \\ $c_5-c_9-c_{11}+2c_{18}-c_{19}$, \\  Equations: $c_2-c_3-c_{11}+c_{12}$,  \\ $2c_2-c_3-c_7-2c_{11}+c_{13}+c_{14}$, \\ $2c_2-c_3-2c_{11}+c_{17}$, \ \  $c_7-c_{13}-c_{15}+c_{18}$, \\ $ 2c_7-2c_{13}-c_{15}+c_{19}$ \end{tabular} \end{tiny} \\   \addlinespace[0.6em]
4 & 15, 18, 1, 11, 2, 9, 15 & 1, 3, 2, 0 &  \begin{tiny} \begin{tabular}{c}$c_5-c_{11}-c_{15}+c_{18}$, \\ $c_1-3c_5+c_{11}+3c_{15}+c_{17}-3c_{18}$ \\ Equations:   $c_3-2c_6+c_8$, \\ $2c_3-3c_6+c_9$, $c_2-c_3-c_{11}+c_{12} $, \\ $c_2-c_6-c_{11}+c_{14}$, \\ $c_2+c_3-2c_6-c_{11}+c_{15}$, \\  $2c_2-c_3-2c_{11}+c_{17}$, \ \ $2c_2-c_6-2c_{11}+c_{18}$ \end{tabular} \end{tiny} \\    \addlinespace[0.6em]
5 & 18, 19, 1, 11, 2, 9, 15 & 1, 3, 2, 0 &\begin{tiny} \begin{tabular}{c} $-c_1+c_{11}+c_{13}-c_{18}$, \ \  $c_5-c_{11}-c_{18}+c_{19}$, \\ $c_1-3c_5+c_{11}+c_{17}+3c_{18}-3c_{19}$ \\
Equations: $ c_3-2c_6+c_8 $, \\ $ 2c_3-3c_6+c_9$, $c_2-c_3-c_{11}+c_{12}$, \\ $c_2-c_3+c_6-c_7-c_{11}+c_{13}$, \\  $c_2+c_3-2c_6-c_{11}+c_{15}$, \\  $c_2-c_6-c_{11}+c_{14} $,  \ $2c_2-c_3-2c_{11}+c_{17}$, \\ $2c_2-c_6-2c_{11}+c_{18}$, \ \  $3c_2-c_3-3c_{11}+c_{19}$ \end{tabular} \end{tiny} \\  \addlinespace[0.6em]
6 & 15, 18, 1, 11, 2, 14, 15 & 1, 3, 2, 0 & \begin{tiny} \begin{tabular}{c}$c_5-c_{11}-c_{15}+c_{18}$, \\ $c_1-3c_5+c_{11}+3c_{15}+c_{17}-3c_{18}$\\  Equations: $ c_3-2c_6+c_8$, \\  $2c_3-3c_6+c_9$, \ \ $c_2-c_3-c_{11}+c_{12}$, \\ $ c_2-c_6-c_{11}+c_{14}$, \ \  $c_2+c_3-2c_6-c_{11}+c_{15}$, \\ $2c_2-c_3-2c_{11}+c_{17}$, \ \ $2c_2-c_6-2c_{11}+c_{18}$ \end{tabular} \end{tiny} \\ \addlinespace[0.6em]
7 & 18, 19, 1, 11, 2, 14, 15 & 1, 3, 2, 0 & \begin{tiny} \begin{tabular}{c} $-c_1+c_{11}+c_{13}-c_{18}$, \ \ \ $c_5-c_{11}-c_{18}+c_{19}$ \\ $ c_1-3c_5+c_{11}+c_{17}+3c_{18}-3c_{19}$ \\ 
Equations: $c_3-2c_6+c_8$, \ $2c_3-3c_6+c_9$, \\ $ c_2-c_3+c_6-c_7-c_{11}+c_{13}$, \\ $c_2-c_3-c_{11}+c_{12}$, \ $c_2-c_6-c_{11}+c_{14}$, \\$c_2+c_3-2c_6-c_{11}+c_{15}$, \ \ $2c_2-c_3-2c_{11}+c_{17}$, \\ $ 2c_2-c_6-2c_{11}+c_{18}$, \ \ $3c_2-c_3-3c_{11}+c_{19}$ \end{tabular} \end{tiny} \\
\midrule
\multicolumn{4}{c}{Motifs 3H} \\
 \midrule
8 &9, 11, 1, 15, 7 & 0, 2, 1, 3 & \begin{tiny} \begin{tabular}{c} $ c_2-3c_5+c_7+c_9+c_{11}-c_{15}$, \\
Equation: $c_1-c_7-c_{11}+c_{15}$ \end{tabular} \end{tiny} \\ 
\bottomrule
\end{tabular*}
}
\end{tiny}
\end{table}

\begin{example}\label{ex:typical:all-motifs}
  The triangulation $\#\idHampeJoswigTriangulation$ has $51$ occurrences of the motifs 3A, 3B, $\ldots \,$, 3J.
  Their frequencies are $6, 5, 0, 24, 0, 2, 4, 7, 2, 1 $.
  Lemma \ref{lemma:evident} says that the motifs 3F, 3G and 3I are globally visible.
  In Tables \ref{tab:ex-motifs:global}, \ref{tab:ex-motifs:partial} and \ref{tab:ex-motifs:hardly} we list all motifs, together with their sets of Schl\"afli walls $\mathcal{W}_\motif$.
  We describe how the Schl\"afli walls are computed for the motifs of type 3H.
  The motif $\motif$ consists of a tetrahedron $ABCD$ and a dangling triangle $CDE$.
  One of the vertices of the tropical line defined by $\motif$ is dual to the tetrahedron.
  In order for the line to be contained in the surface, the other vertex must lie on the edge dual to the dangling triangle, i.e., the minimum in the tropical polynomial must be achieved at the monomials corresponding to $C$, $D$ and $E$.
  These linear inequalities define the visibility cone.
  Note that the occurrence 8 of motif 3H in Table~\ref{tab:ex-motifs:hardly} is hardly visible, since its visibility cone is not full dimensional. 

  The list of partially visible motifs in Table~\ref{tab:ex-motifs:partial} shows that the Schl\"afli walls generate a hyperplane arrangement defined by the seven linear forms:
  \[
    \begin{array}{l}
      H_0: c_2 -c_9-c_{11}+c_{15}-c_{17}+c_{18} \\
      H_1: c_2-c_9 -c_{11}+2c_{15}-c_{17}-c_{18}+c_{19}\\
      H_2: c_1-c_9-2c_{11}+2c_{15}+c_{17}-c_{18}\\
      H_3: c_1-c_9-2c_{11}+c_{15}+c_{17}+c_{18}-c_{19}\\
      H_4: c_1-c_7+c_9-c_{11}-c_{15}+c_{18}\\
      H_5: c_1-2c_{11}-c_{15}+c_{17}+2c_{18}-c_{19} \\
      H_6: c_4-c_7-c_{13}+2c_{18} -c_{19} \\
    \end{array}
  \]
  We write $H_i^+$ and $H_i^-$ for  the two halfspaces defined by  these linear forms.

  Let us look at the Schl\"afli walls from partially visible motifs of type 3B.
  The hyperplanes for the Schl\"afli walls of these motifs are $H_4$ and $H_5$.
  They divide the secondary cone into  four cells $H_4^+ \, H_5^+$, $H_4^+ \, H_5^-$, $H_4^- \, H_5^+$, $H_4^- \, H_5^-$. 
  These four cells correspond in Table \ref{tab:ex-motifs:partial} to the occurrences 8, 6,  7 and 9, in this order.
  Each motif occurrence is visible in precisely that cell.

  For the motifs of type 3D, we also have two hyperplanes $H_0$ and $H_2$. 
  These give the Schl\"afli walls that divide the secondary cone into four cells.
  In the cells $H_0^+ \, H_2^+$  and $H_0^- \, H_2^-$ the  motifs  11, 13 and  10, 12  are visible, respectively.
  In the cell $H_0^- \, H_2^+$ none of the partially visible motif is visible.
  Finally, on the cell $H_0^+ \, H_2^-$ all the partially visible motifs are visible. 
  Moreover, when we pass through the Schl\"afli wall from $H_0^-$ to $H_0^+$, the motifs 0 and 1 of type 3A are no longer visible, while the motifs 11 and 13 of type 3D become visible. 
\end{example}

\begin{remark} In this section we have considered the problem whether a motif is visible on a certain tropical surface. If the motif is visible, the next natural step is to ask whether the tropical line 
defined by the motif is realizabile on a  cubic surface defined over a field with valuation. More precisely, given a tropical line $L$ on  a tropical surface $S$, we ask whether there exits a line $\ell$ on a cubic surface $\mathcal{S}$ such that $\textrm{trop}(\ell) = L$ and $\textrm{trop}(\mathcal{S}) = S$.  This realizability problem has been studied in \cite{BogartKatz} and \cite{BrugalleShaw}. The authors show that non-degenerate lines in  families of type 3I are not realizable on surfaces over a valued field  of characteristic zero. Moreover, in the recent article \cite{geiger} the result is extended to valued field with residue field of characteristic different from two.  The paper also provides an example of a line of type 3J which is realizable on a cubic surface defined over the field of $5$-adic numbers. 
\end{remark}
\section{The Universal Fano Variety and its Tropical Discriminant}
\label{sec:fano}

We now relate our combinatorial results to
classical algebraic geometry. The natural parameter space for our problem is
 the \emph{universal Fano variety}. Its points are pairs consisting of a line and 
 a cubic surface that contains it. The map onto the second factor is a $27$-to-$1$ 
cover of $\PP^{19}$. The fiber over a smooth cubic surface, regarded as a point in $\PP^{19}$,
is the \emph{Fano variety} on that surface, i.e.~the set of its $27$ lines.
The branch locus of the $27$-to-$1$ map is its \emph{discriminant},
a hypersurface in $\PP^{19}$. We shall see that the codimension one 
skeleton of the Schl\"afli fan plays the role of the tropical discriminant for this map.

We follow the approach to tropical geometry  in  the textbook 
 \cite{TropicalBook}. One starts with a classical variety, 
 defined by an ideal $I$ in a (Laurent) polynomial ring over a field with valuation.
  The  tropical variety $\Trop(I)$ is the set of all weight vectors $w$ whose 
  initial ideal $\initial_w (I)$ contains no monomials.
  We would like to apply this to the universal Fano variety 
for lines on cubic surfaces, represented by an ideal
in the polynomial ring in the unknowns $p_{ij}$ and $c_k$. This is
 the homogeneous coordinate ring of $\PP^{5} \times \PP^{19}$.
The first factor contains the Grassmannian $\Gr(2,4)$ of
 lines in $\PP^3$ as a quadratic hypersurface in~$\PP^5$.

 The quadric defining $\Gr(2,4)$ is the Pfaffian of the skew-symmetric matrix
 \[
   \cP \quad = \quad
\begin{small} \begin{pmatrix}
0 & p_{01} & p_{02} & p_{03} \, \\
-p_{01} &   0 & p_{12} & p_{13} \, \\
-p_{02} & -p_{12} &  0 & p_{23} \, \\
-p_{03} & -p_{13} & -p_{23} &  0 \, \end{pmatrix}\end{small} . \]
We have $\Pfaff(\cP) = p_{01} p_{23} - p_{02} p_{13} + p_{03} p_{12}$.
The line with Pl\"ucker coordinates $(p_{ij})$ is the image 
in $\PP^3$ of the column span of the associated rank $2$ matrix~$\cP$.

The second factor $\PP^{19}$  parametrizes cubic forms $f$. Its coordinates
are the coefficients $(c_0,c_1,\ldots,c_{19})$.
Fix a row vector of unknowns
$\lambda = (\lambda_0,\lambda_1,\lambda_2,\lambda_3)$ and
form the vector-matrix product $ \lambda \cP $.
 We write $f(\lambda \cP)$ for the polynomial 
obtained by replacing  $(w,x,y,z)$ with $\lambda \cP$.
Thus, $f(\lambda \cP)$ is a homogeneous cubic in $\lambda$.
Its $20$ coefficients are  bihomogeneous forms of degree $(3,1)$, like
\begin{equation}
\label{eq:mybihomo}
\begin{matrix}
p_{01} p_{12} p_{13} c_5- p_{12}^2 p_{13} c_8
+ p_{01} p_{12}^2 c_7 -p_{12} p_{13}^2 c_6
-p_{12}^3 c_9 \\ +\,p_{01}p_{13}^2 c_2 -p_{01}^2 p_{12} c_4
-p_{01}^2 p_{13} c_1-p_{13}^3 c_3+p_{01}^3 c_0.
\end{matrix}
\end{equation}
We write $I_{\rm ufv}$ for the ideal in $\QQ [p_{01}, p_{02}, \ldots, p_{23},\,c_0,c_1,\ldots,c_{19}]$
that is generated by these $20$ polynomials together with the Pl\"ucker quadric $\Pfaff(\cP)$.

The zero set of $I_{\rm ufv}$ in $\PP^5 \times \PP^{19}$  is 
the universal Fano variety of lines on cubic surfaces. We verified by computations on affine charts
that the~ideal $I_{\rm ufv}$ defines the correct scheme.
We consider  the \emph{tropical universal Fano variety}
\[ \Trop( I_{\rm ufv}) \quad \subset \quad \TP^5 \times \TP^{19}. \]

By the Structure Theorem  \cite[Theorem 3.3.5]{TropicalBook}, 
$\Trop( I_{\rm ufv}) $ is a pure $19$-di\-men\-si\-o\-nal balanced fan.
For simplicity, we disregard boundary phenomena, and we 
 replace each tropical projective space
$\TP^n$ with its dense tropical torus $\RR^{n+1}\!/\RR\vones$.
The former is compact while the latter is not. For a detailed discussion
see \cite[\S 6.2]{TropicalBook}. The points~in $\Trop( I_{\rm ufv})$
are the pairs consisting of a line in $\TP^3$ and a 
   cubic surface  that contains the line.
   The tropical line is represented by its Pl\"ucker vector $P   \in \RR^{6}$.
    The cubic is represented by its coefficient vector $C \in \RR^{20}$.
    Unlike in previous sections, this tropical cubic not be tropically smooth.
  A pair $(P,C)$ lies in $\Trop( I_{\rm ufv}) $ if and only if
  $\initial_{(P,C)}(I_{\rm ufv})$ contains no monomial.
  We take this initial ideal in the Laurent polynomial ring.

\begin{example}\label{ex:honeycomb:fano}
  The line given by $P = (26, 6, 17, 7, 18, 0)$ lies on the
   surface given~by $C=( 32, 17, 20, 41, 26, 17, 32, 33, 36, 54, 8, 1, 14, 4, 7, 18, 0, 0, 0, 0)$.
  This pair corresponds to the motif of type 3D in Example~\ref{ex:honeycomb:motif3D}; see the diagram on the left-hand side of Figure~\ref{fig:honeycomb:motif3D}.
  We verify the containment algebraically by checking that
\begin{equation}
\label{eq:inPC}
\begin{array}{ccl} \quad \initial_{(P,C)}(I_{\rm ufv}) & \,\,\,=\,\,\, & 
\langle \,p_{03} p_{12}-p_{02} p_{13}\,,\,
  p_{01} c_5-p_{12}c_8 \,, \,p_{13} c_{14}+p_{12} c_{15}\,, \,
   \,\,\,   \\ & &   p_{03} c_{14}+p_{02} c_{15}\,, \, p_{23} c_{15}+p_{13}c_{18}\,, \,
 p_{03} c_{11}+p_{13} c_{17}\,, \\ 
& &   \, p_{23} c_{14}-p_{12} c_{18}\,, \, p_{02} c_{11}+p_{12} c_{17}\,
      \rangle
\end{array}     
\end{equation}
contains no monomial.
 This initial ideal lives in the Laurent polynomial ring.
For instance, the ten terms in (\ref{eq:mybihomo}) have weights
  $ 68, 68,73,75,75, 82, 85, 87, 95$, $110$ in this order, and the resulting
    initial form equals $ \,(p_{01} c_5-p_{12}c_8 )p_{12}p_{13}$.

The point $(P,C)$ lies in the relative interior of a maximal cell of $\Trop( I_{\rm ufv}) $.
The inequality description of this cell  is read off
from a Gr\"obner basis of $I_{\rm ufv}$. For instance, the polynomial
(\ref{eq:mybihomo}) contributes the equation
$\,P_{01}+ C_5 = P_{12} + C_8\,$ and eight inequalities, namely,
$P_{01}+C_5 +P_{12}+P_{13}$ is 
bounded above~by
\[
\begin{matrix}
 P_{01}+2  P_{12}+ C_{7}\,,\,\,
3  P_{12}+ C_{9}\,,\,\,
 P_{12}+2  P_{13}+ C_{6}\,,\,\,
 P_{01}+2  P_{13}+ C_{2}, \\
2  P_{01}+ P_{12}+ C_{4}\,,\,\,
2  P_{01}+ P_{13}+ C_{1}\,,\,\,
3  P_{13}+ C_{3}\,\,\, {\rm and} \,\,\,
3  P_{01}+ C_{0}.\end{matrix}
\]
Such constraints, derived from polynomials in $I_{\rm ufv}$,
define the cells of $\Trop( I_{\rm ufv}) $.
\end{example}

The maximal cones of  $\Trop( I_{\rm ufv}) $ represent occurrences 
of motifs in $3 \Delta_3$. In particular,
if we could compute this fan, then this would be an 
\emph{ab initio} derivation of the motifs 3A, 3B, $\ldots\,$, 3J.
These were found geometrically in~\cite{PV}.

\begin{remark} \label{rmk:identified}
Motifs and their occurrences can be identified from  initial ideals
$\initial_{(P,C)}(I_{\rm ufv})$. For instance, the indices
$i$ of the unknowns $c_i$ in  (\ref{eq:inPC})
form the list $(A,B,C,D,E,F,G,H) \! = (11,17,18,14,15,5,8)$ we saw 
in Example~\ref{ex:honeycomb:alternate3D}.
\end{remark}

Unfortunately, it is very difficult to compute with the ideal $I_{\rm ufv}$.
Even finding a single Gr\"obner basis is hard.
For instance, the computation of (\ref{eq:inPC})
 only terminated after we imposed some degree constraints in \macaulay.
One open problem naturally arising here is to find a tropical basis of $I_{\rm ufv}$.
The Schl\"afli fan fits into a
broader theory, yet to be developed, for discriminants
of morphisms in tropical algebraic geometry.
We propose the following approach.
Let $\mathcal{X}$ be a tropical variety in $\TP^d \times \TP^n$
and $\phi$ the projection from $\mathcal{X}$ onto the second factor $\TP^n$.
We assume that $\phi$ is onto, so  $\dim(\mathcal{X}) \geq n$.
Let $\mathcal{X}^{(n-1)}$ be the subcomplex of $\mathcal{X}$ consisting of all
cells of dimension at most $n-1$. 
 If this is the \emph{ramification locus}
 then $\phi(\mathcal{X}^{(n-1)})$  plays the role of the
 \emph{branch locus}. 
 
 \begin{example}
 \label{ex:tropdisc}
 Tropical discriminants  \cite{DFS} are a special
 case of this construction. Let $\mathcal{A}$ be a subset
 of $n+1$ elements in $\ZZ^d$. Consider hypersurfaces in $d$-space
 defined  by Laurent polynomials with these $n+1$ terms.
We write $C = (C_a:a\in\mathcal{A}) \in \RR^{n+1}$ for the vector of coefficients, 
and $P = (P_1,\ldots,P_d)$ for a point in  $\RR^d$.
The \emph{universal tropical hypersurface} is the tropical variety 
$\mathcal{X}$ defined by
\begin{equation}
\label{eq:troppoly}
 \bigoplus_{a \in \mathcal{A}} \, C_a \odot P^a \quad = \quad
\bigoplus_{a \in \mathcal{A}} \, C_a \odot P_1^{\odot a_1} \odot P_2^{\odot a_2} \odot
\cdots \odot P_d^{\odot a_d} . 
\end{equation}
The map $\phi :\mathcal{X} \rightarrow \RR^{n+1}/\RR {\bf 1}, (C,P) \mapsto C$ is surjective.
The fiber $\phi^{-1}(C)$ is the hypersurface in $\RR^d$
whose tropical polynomial has coefficients $C$.
The tropical variety $\mathcal{X}$ has dimension $n+d-1$. It is a fan with $\binom{n+1}{2}$ maximal cones, one for
each pair of terms in (\ref{eq:troppoly}). The subfan $\mathcal{X}^{(n-1)}$
consists of $\binom{n+1}{d+2}$ cones of dimension $n-1$. On each cone,
the minimum among the $n+1$ terms in (\ref{eq:troppoly}) is attained
by a fixed set of $d+2$ terms.
Hence the regular subdivision of $\mathcal{A}$ defined by $C$
is not a triangulation. The image $\phi(\mathcal{X}^{(n-1)})$  
consists of the~cones of codimension $\geq 1$ in the secondary fan of $\mathcal{A}$.
In particular, the tropical discriminant defined above contains that
of \cite{DFS}. The difference arises from~the distinction between the 
$\mathcal{A}$-discriminant and the principal $\mathcal{A}$-determinant; see~\cite{DT} and \cite{GKZ}.
\end{example}
 
\begin{remark}
 The number of cones in $\mathcal{X}^{(n-1)}$ is much smaller than that of its
 image under the projection $\phi$. This phenomenon is familiar from
 computer algebra (cf.~elimination theory) and optimization (cf.~extended formulations).
 In our context, take $\mathcal{A} = 3 \Delta_3$ in Example \ref{ex:tropdisc}.
 The universal cubic surface  $\mathcal{X}$ 
has only $\binom{20}{2} = 190$  maximal cones,
 whereas its discriminant   $\phi(\mathcal{X}^{(n-1)})$ forms the walls
 between many more than $ \numberOfTriangulations$ cones.
 \end{remark}
 
We now come to the main theoretical result in this section.
The role of points will be played by lines. An analogous
result holds for Fano varieties of arbitrary hypersurfaces (\ref{eq:troppoly}).
We focus on the case of cubic surfaces in $\TP^3$.

\begin{proposition}
  Let $\mathcal{X} = \Trop( I_{\rm ufv}) $ be the tropical universal Fano variety~in $\TP^5 {\times} \TP^{19}$ and $\phi$ the map onto the second factor (space of tropical cubics).
    The~tropical discriminant of $\phi$ is contained in the union of the codimension~$1$ cones in the Schl\"afli fan.
  The latter is a subset of the union of all Schl\"afli~walls.
\end{proposition}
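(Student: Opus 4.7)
The second inclusion is essentially built into the definition of the Schl\"afli fan: each codimension $1$ cone lying in the relative interior of a secondary cone $\seccone(T)$ is a facet of some full-dimensional Schl\"afli cone and is therefore cut out by a linear form in $\mathcal{W}_\motif$ for some motif $\motif$ occurring in $T$. The codimension $1$ cones sitting on the boundary of $\seccone(T)$ lie on facets of the secondary fan itself, which mark a combinatorial change of the underlying triangulation and may be regarded as limits of Schl\"afli walls.

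For the first inclusion I would argue by contrapositive. Fix a point $C^\ast$ in the relative interior of a maximal cone $\Sigma$ of the Schl\"afli fan; the goal is to show $C^\ast \notin \phi(\mathcal{X}^{(n-1)})$ with $n=19$. Equivalently, every $(P^\ast,C^\ast)\in \mathcal{X}$ lies in the relative interior of a maximal $19$-dimensional cell of $\mathcal{X}$.

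To establish this, note that such a pair $(P^\ast,C^\ast)$ corresponds to a tropical line $L(P^\ast)$ contained in the surface $S_{C^\ast}$, realizing some visible motif $\motif\in\mathcal{M}_{C^\ast}$. Algorithm~\ref{algo:line-on-hypersurface} certifies this containment via covering certificates that record, along each edge and ray of $L(P^\ast)$, which monomials of the tropical cubic attain the minimum. These certificates encode precisely the combinatorial data determining the initial ideal $\initial_{(P^\ast,C^\ast)}(I_{\rm ufv})$, as Example~\ref{ex:honeycomb:fano} illustrates. Since $C^\ast$ lies in the interior of $\Sigma$, the visibility of $\motif$ persists and the Pl\"ucker vector $P_\motif(C')$ is given by the linear formula from Algorithm~\ref{algo:visibility}, step $1$, throughout a neighborhood of $C^\ast$ in $\Sigma$. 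Consequently the Gr\"obner cell $\{(P',C') : \initial_{(P',C')}(I_{\rm ufv}) = \initial_{(P^\ast,C^\ast)}(I_{\rm ufv})\}$ contains the $19$-dimensional slice $\{(P_\motif(C'),C') : C' \text{ near } C^\ast\}$ and has $(P^\ast,C^\ast)$ in its relative interior. Because $\mathcal{X}$ is pure of dimension $19$ by the Structure Theorem applied to the irreducible classical universal Fano variety, this Gr\"obner cell is a maximal cell of $\mathcal{X}$, so $(P^\ast,C^\ast)\notin\mathcal{X}^{(n-1)}$. Ranging over the finitely many lines contributing to $\phi^{-1}(C^\ast)\cap\mathcal{X}$ yields $C^\ast\notin\phi(\mathcal{X}^{(n-1)})$.

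The main obstacle will be the treatment of the family motifs $3$I and $3$J, where the line is not uniquely determined by the motif but moves in a one-parameter family on $S_{C^\ast}$. A naive count then contributes a $20$-dimensional subset to $\mathcal{X}$ over $\Sigma$, in apparent tension with $\dim\mathcal{X}=19$. The resolution invokes the realizability results of \cite{BogartKatz,BrugalleShaw,geiger} cited at the end of Section~\ref{sec:schlaefli}: only a discrete subset of the tropical family lifts to the classical universal Fano, so the corresponding cells of $\mathcal{X}$ retain the expected dimension $19$ and the Gr\"obner-cell argument above still applies.
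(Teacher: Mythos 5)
Your core argument is essentially the paper's: on the interior of a Schl\"afli cone the set of visible motifs is constant, each visible isolated motif determines its line by coordinates that are linear forms in $C$, so the cells of $\mathcal{X}$ met by the fiber $\phi^{-1}(C)$ contain $19$-dimensional families and, by purity of $\Trop(I_{\rm ufv})$, are maximal cells; hence the fiber avoids $\mathcal{X}^{(18)}$ and the interior of the cone avoids the discriminant. The paper's proof says exactly this, just more tersely and without the Gr\"obner-cell packaging, and it likewise treats the second inclusion as essentially definitional.

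The genuine weak point is your closing paragraph on the family motifs 3I and 3J. First, the ``tension'' you describe rests on a conflation: the fiber $\phi^{-1}(C)\cap\mathcal{X}$ is not the set of all tropical lines on $S_C$; by the fundamental theorem of tropical geometry it consists of valuations of Pl\"ucker vectors of classical lines on classical cubics whose coefficient valuations equal $C$. This is precisely how the paper sidesteps families: its proof speaks only of ``the $27$ lines'', i.e.\ the tropicalized classical lines, whose Pl\"ucker valuations are given by fixed linear forms in $C$ on each Schl\"afli cone. Second, your proposed resolution does not deliver what you need: the cited realizability results concern specific fields and a fixed cubic (3I lines are non-realizable when the residue characteristic is not $2$; one 3J line is realizable over $\QQ_5$), whereas $\Trop(I_{\rm ufv})$ is a constant-coefficient tropicalization over a residue-characteristic-zero field, and the fiber of $\mathcal{X}$ over $C$ is a union over \emph{all} classical cubics with valuation $C$, so per-cubic discreteness does not by itself control that union. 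Third, even granting that only isolated members of a family contribute, your slice argument collapses for them: step 1 of Algorithm~\ref{algo:visibility} produces a linear formula $P_\motif(C')$ only when the motif determines the line, which is exactly what fails for 3I and 3J, so you have no $19$-dimensional slice inside the relevant Gr\"obner cell. The clean repair is to drop the family discussion altogether and argue, as the paper does, directly with the tropicalizations of the classical lines.
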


\begin{proof}
All cubics $C$ in the interior of one fixed Schl\"afli cone
have the same visible motifs. The Pl\"ucker vectors $P$ of the $27$ lines
are linear functions in the entries of $C$, as long as $C$ staying within one Schl\"afli cone.
Hence the set of cells in $\mathcal{X}$ that are
intersected by the fiber $\phi^{-1}(C)$ remains constant throughout that
Schl\"afli cone. These cells all have the full dimension $19$. In particular,
$\phi^{-1}(C)$ is disjoint from $\mathcal{X}^{(18)}$ for $C$ 
in the interior of a Schl\"afli cone. This shows that this interior is
disjoint from the tropical discriminant of $\phi$.
\end{proof}

We conclude with a brief discussion of a
 related universal family. It lives in $\PP^3 \times \PP^{19}$,
where $\PP^3$ now parametrizes planes in the ambient $3$-space.
Each plane $\{u_0 x_0 + u_1 x_1 + u_2 x_2 + u_3 x_3 = 0\}$
intersects a cubic surface in a plane cubic curve. The plane is
a \emph{tritangent plane} if the plane cubic decomposes into three lines.
The \emph{universal Brill variety}  is the 
$19$-dimensional irreducible variety consisting of all pairs $(u,f)$,
where $u = (u_0:u_1:u_2:u_3)$ is a tritangent plane to the cubic surface $\{f = 0\}$.
The map from this variety onto $\PP^{19}$ is a $45$-to-$1$ covering,
since a general cubic surface has $45$ tritangent planes.

We introduce an ideal $I_{\rm bri}$  that defines the universal Brill variety.
It lives in the ring $\, \QQ [u_0,u_1,u_2,u_3,\,c_0,c_1,\ldots,c_{19}]$, 
where the last ten unknowns are the coefficients of a ternary cubic.
In these unknowns, we consider the prime ideal of
codimension $3$ and degree $15$ that defines the factorizable cubics.
Its variety is an instance of a \emph{Chow variety},
and the equations are known as \emph{Brill equations} \cite[\S I.4.H]{GKZ}.
This prime ideal is generated by $35$ quartics in the $10$ unknowns. 

We now derive $35$ generators of $I_{\rm bri}$.
Set $x_3 = -\frac{1}{u_3} (u_0 x_0 + u_1 x_1 + u_2 x_2)$ in 
 $f$, and  clear denominators to get a ternary cubic with
coefficients  $\QQ[u_0,u_1,u_2,u_3]$. We substitute these
cubics into the Brill equations and we remove factors of~$u_3$.
The resulting  $35$ polynomials of bidegree $(7,4)$ in $(u,c)$ generate our ideal $I_{\rm bri}$.

We are interested in the resulting \emph{tropical universal Brill variety}
\[ \mathcal{X} \,\, = \,\, \Trop( I_{\rm bri}) \quad \subset \quad \TP^3 \times \TP^{19}. \]
Its points are pairs consisting of a tropical cubic
and a tritangent plane. The maximal cones of 
$\Trop( I_{\rm bri}) $ represent occurrences of \emph{triple motifs}
in $3 \Delta_3$. 
It would be desirable to compute these.
We note that the tritangent planes correspond
to the $45$ triangles in the \emph{Schl\"afli graph}. This is the 
$10$-regular graph whose vertices
are the $27$ lines, and whose edges are incident pairs of lines.
The motifs and the triple motifs that occur in a triangulation 
can be seen as a tropical structure
 for annotating and extending the Schl\"afli graph.

\subsection*{Acknowledgements}
We are very grateful to Lars Kastner, Benjamin Lorenz and Andreas Paffenholz for their help with the computations for this project. 
We thank Sara Lamboglia, Yue Ren and Emre Sert\"oz for their comments on a manuscript version of this article. 
We are also grateful to the two anonymous referees whose comments helped us improving the article.  
Michael~Joswig was supported by Deutsche Forschungsgemeinschaft (EXC
2046: "MATH+", SFB-TRR 195: "Symbolic Tools in Mathematics and their
Application", and GRK 2434: "Facets of Complexity").

\bibliographystyle{spmpsci}
\bibliography{cubics}
  
\end{document}